\providecommand{\abs}[1]{\lvert#1\rvert}
\providecommand{\abso}[1]{\bigg{\lvert}#1\bigg{\rvert}}
\providecommand{\absol}[1]{\big{\lvert}#1\big{\rvert}}
\theoremstyle{definition}
\newtheorem{theorem}{Theorem}[section]
\newtheorem{corollary}[theorem]{Corollary}
\newtheorem{lemma}[theorem]{Lemma}
\theoremstyle{definition}  
\newtheorem{defi}[theorem]{Definition}
\theoremstyle{definition}
\newtheorem{remark}[theorem]{Remark}
\numberwithin{equation}{section}
\begin{document} 
\title{Generalized Laplacian decomposition of vector fields on fractal surfaces}

\author{Daniel Gonz\'alez-Campos$^{(1)}$, Marco Antonio P\'erez-de la Rosa$^{(2)}$\\and\\ Juan Bory-Reyes$^{(3)}$}

\date{ \small  $^{(1)}$   Escuela Superior de F\'isica y Matem\'aticas.  Instituto Polit\'ecnico Nacional. CDMX. 07738. M\'exico. \\ E-mail: daniel\_uz13@hotmail.com \\
	$^{(2)}$ Department of Actuarial Sciences, Physics and Mathematics, Universidad de las Am\'ericas Puebla.
	San Andr\'es Cholula, Puebla. 72810. M\'exico. \\ Email: marco.perez@udlap.mx \\
	$^{(3)}$ ESIME-Zacatenco. Instituto Polit\'ecnico Nacional. CDMX. 07738. M\'exico. \\ E-mail: juanboryreyes@yahoo.com }

	\maketitle
	\begin{abstract} 
We consider the behavior of generalized Laplacian vector fields on a Jordan domain of $\mathbb{R}^{3}$ with fractal boundary. Our approach is based on properties of the Teodorescu transform and suitable extension of the vector fields. Specifically, the present article addresses the decomposition problem of a H\"older continuous vector field on the boundary (also called reconstruction problem) into the sum of two generalized Laplacian vector fields in the domain and in the complement of its closure, respectively. In addition, conditions on a H\"older continuous vector field on the boundary to be the trace of a generalized Laplacian vector field in the domain are also established.
\end{abstract}
\small{
\noindent
\textbf{Keywords.} Quaternionic analysis; vector field theory; fractals.\\
\noindent
\textbf{Mathematics Subject Classification (2020).} 30G35, 32A30, 28A80.} 

\section{Introduction}
Quaternionic analysis is regarded as a broadly accepted branch of classical analysis referring to many different types of extensions of the Cauchy-Riemann equations to the quaternion skew field $\mathbb{H}$, which would somehow resemble the classical complex one-dimensional function theory.

An ordered set of quaternions $\psi:=(\psi_1, \psi_2, \psi_3)\in \mathbb{H}^{3}$, which form an orthonormal (in the usual Euclidean sense) basis in $\mathbb{R}^{3}$ is called a structural $\mathbb{H}$-vector.

The foundation of the so-called $\psi$-hyperholomorphic quaternion valued function theory, see \cite{NM, VSMV, MS} and elsewhere, is that the structural $\mathbb{H}$-vector $\psi$ must be chosen in a way that the factorization of the quaternionic Laplacian holds for $\psi$-Cauchy-Riemann operators. This question goes back at least as far as N\^{o}no's work \cite{Nono1, Nono2}. 

The use of a general orthonormal basis introducing a generalized Moisil-Teodorescu system is the cornerstone of a generalized quaternionic analysis, where the generalized Cauchy-Riemann operator with respect to the standard basis in $\mathbb{R}^3$ are submitted to an orthogonal transformation. Despite the fact that some of the results in the present work can be obtained after the action of an orthogonal transformation on the standard basis; we keep their proofs in the work for the sake of completeness.
 
The $\psi$-hyperholomorphic functions theory by itself is not much of a novelty since it can be reduced by an orthogonal transformation to the standard case. In the face of this, the picture changes entirely by studying some important operators involving a pair of different orthonormal basis. 

Moreover, the possibility to study simultaneously several conventional known theories, which can be embedded into a corresponding version of $\psi$-hyperholomorphic functions theory, again cannot be reduced to the standard context and reveal indeed the relevance of the $\psi$-hyperholomorphic functions theory.
 
The advantageous idea behind the unified study of particular cases of a generalized Moisil-Teodorescu system in $\psi$-hyperholomorphic functions theory simultaneously is considered in the present work.

The special case of structural $\mathbb{H}$-vector $\psi^\theta:=\{\textbf{i},\,  \textbf{i}e^{\textbf{i}\theta}\textbf{j},\, e^{\textbf{i}\theta}\textbf{j}\}$ for $\theta\in[0,2\pi)$ fixed and its associated $\psi^\theta$-Cauchy-Riemann operator 
	\begin{equation*}
	{^{\psi^{\theta}}}D:=\displaystyle\frac{\partial}{\partial x_{1}}\textbf{i}+\frac{\partial}{\partial x_{2}}\textbf{i}e^{\textbf{i}\theta}\textbf{j}+\frac{\partial}{\partial x_{3}} e^{\textbf{i}\theta}\textbf{j},
	\end{equation*} 
are used in \cite{BAPS} to give a quaternionic treatment of inhomogeneous case of the system

	\begin{equation}\label{sedi}
	\left\{
	\begin{array}{rcl}
	-\displaystyle \frac{\partial f_{1}}{\partial x_{1}}+\left(\frac{\partial f_{2}}{\partial x_{2}}-\frac{\partial f_{3}}{\partial x_{3}}\right)\sin\theta-\left(\frac{\partial f_{3}}{\partial x_{2}}+\frac{\partial f_{2}}{\partial x_{3}}\right)\cos\theta & = & 0, 
	\\ {}\\	\displaystyle {\left(\frac{\partial f_{3}}{\partial x_{3}}-\frac{\partial f_{2}}{\partial x_{2}}\right)}\cos\theta-\left(\frac{\partial f_{3}}{\partial x_{2}}+\frac{\partial f_{2}}{\partial x_{3}}\right)\sin\theta & = & 0,
	\\ {}\\ \displaystyle {-\frac{\partial f_{3}}{\partial x_{1}}+\frac{\partial f_{1}}{\partial x_{3}}\sin\theta+\frac{\partial f_{1}}{\partial x_{2}}\cos\theta}  & = & 0, \\ {}\\
	\displaystyle {\frac{\partial f_{2}}{\partial x_{1}}-\frac{\partial f_{1}}{\partial x_{3}}\cos\theta+\frac{\partial f_{1}}{\partial x_{2}}\sin\theta} & = & 0, 
	\end{array}
	\right. 
	\end{equation}
wherein the unknown well-behaved functions $f_m: \Omega \rightarrow \mathbb{C}, m=1,2,3$ are prescribed in an smooth domain $\Omega\subset\mathbb{R}^{3}$. 

From now on, an smooth vector field $\vec{f}=(f_{1}, f_{2}, f_{3})$ that satisfies \eqref{sedi}, will said to be a generalized Laplacian vector field.

We will consider complex quaternionic valued functions (a detailed exposition of notations and definitions will be given in Section 2) to be expressed by
\begin{equation}
	\notag
	f=f_{0}+f_{1}\textbf{i}+f_{2}\textbf{j}+f_{3}\textbf{k},
\end{equation}
where $\textbf{i}$, $\textbf{j}$ and $\textbf{k}$ denote the quaternionic imaginary units.

On the other hand, the one-to-one correspondence 
\begin{equation}\label{corre}
\mathbf{f}=f_1\mathbf{i}+f_2\mathbf{j}+f_3\mathbf{k}\, \longleftrightarrow \vec{f}=(f_{1}, f_{2}, f_{3}) 
\end{equation}
makes it obvious that $\eqref{sedi}$ can be obtained from the classical Moisil-Theodorescu system after the action of some element in $O(3)$ as: 
$${^{\psi^{\theta}}}D[\mathbf{f}]= 0.$$ 

System \eqref{sedi} contains as a particular case the well-known solenoidal and irrotational, or harmonic system of vector fields (see \cite{ABS, ABMP} and the references given there). Indeed, under the correspondence $\mathbf{f}=f_1\mathbf{i}+f_3\mathbf{j}+f_2\mathbf{k}\, \longleftrightarrow \vec{f}=(f_{1}, f_{2}, f_{3})\,$ we have for $\theta=0$:
\begin{equation}\label{equi}
{}{^{\psi^{0}}}D[\mathbf{f}]=0\,\Longleftrightarrow \,
\begin{cases}
\text{div} \vec{f}=0,\cr
\text{rot} \vec{f}=0.
\end{cases}
\end{equation}

Besides, the system \eqref{sedi} includes other partial differential equations systems (see \cite{BAPS} for more details): A particular case of the inhomogeneous Cimmino system (\cite{C}) when one looks for a solution $(f_1,f_2,f_3)$, where each $f_m,\,m=1,2,3$ does not depend on $x_0$. This system is obtained from \eqref{sedi} for $\theta=\frac{\pi}{2}$. Also, an equivalent system to the so-called the Riesz system \cite{Riesz} studied in \cite{Gur, Gur2}, which can be obtained from \eqref{sedi} for $\theta=\pi$ and the convenient embedding in $\mathbb{R}^3$.
	
In order to get more generalized results than those of \cite{ABMP}, it is assumed in this paper that $\Omega\subset \mathbb{R}^{3}$ is a Jordan domain (\cite{HN}) with fractal boundary $\Gamma$ in the Mandelbrot sense, see \cite{FKJ, FJ}.

Let us introduce the temporary notations $\Omega_{+}:=\Omega$ and $\Omega_{-}:=\mathbb{R}^{3}\setminus \{\Omega_{+}\cup\Gamma\}$. We are interested in the following problems: Given a continuous three-dimensional vector field $\vec{f}: \Gamma \rightarrow \mathbb{C}^{3}$:
	\begin{itemize}
		\item [$(I)$] 
		(Problem of reconstruction) Under which conditions can $\vec{f}$ be decomposed on $\Gamma$ into the sum:
		\begin{equation} \label{des}
		\vec{f}(t)=\vec{f}^{+}(t)+\vec{f}^{-}(t),  \quad \forall \, t\in\Gamma,
		\end{equation}
where $\vec{f}^{\pm}$ are extendable to generalized Laplacian vector fields $\vec{F}^{\pm}$ in $\Omega_{\pm}$, with $\vec{F}^{-}(\infty)=0$?
		\item [$(II)$] When $\vec{f}$ is the trace on $\Gamma$ of a generalized Laplacian vector field $\vec{F}^{\pm}$ in $\Omega_{\pm}\cup\Gamma$?
	\end{itemize}
	
In what follows, we deal with problems $(I)$ and $(II)$ using the quaternionic analysis tools and working with $\mathbf{f}$ instead of $\vec{f}$ under the  one-to-one correspondence (\ref{corre}). It will cause no confusion if we call $\mathbf{f}$ also vector field.

In the case of a rectifiable surface $\Gamma$ (the Lipschitz image of some bounded subset of $\mathbb{R}^{2}$) these problems have been investigated in \cite{GPB}. 
\section{Preliminaries.}
\subsection{Basics of $\psi^{\theta}$-hyperholomorphic function theory.}
Let $\mathbb{H}:=\mathbb{H(\mathbb{R})}$ and $\mathbb{H(\mathbb{C})}$ denote the sets of real and complex quaternions respectively. If $a\in\mathbb{H}$ or $a\in\mathbb{H(\mathbb{C})}$, then $a=a_{0}+a_{1}\textbf{i}+a_{2}\textbf{j}+a_{3}\textbf{k}$, where the coefficients $a_{k}\in\mathbb{R}$ if $a\in\mathbb{H}$ and  $a_{k}\in\mathbb{C}$ if $a\in\mathbb{H(\mathbb{C})}$. The symbols
	$\textbf{i}$, $\textbf{j}$ and $\textbf{k}$ denote different imaginary units, i. e. $\textbf{i}^{2}=\textbf{j}^{2}=\textbf{k}^{2}=-1$ and they satisfy the following multiplication rules $\textbf{i}\textbf{j}=-\textbf{j}\textbf{i}=\textbf{k}$; $\textbf{j}\textbf{k}=-\textbf{k}\textbf{j}=\textbf{i}$; $\textbf{k}\textbf{i}=-\textbf{i}\textbf{k}=\textbf{j}$. The unit imaginary $i\in\mathbb{C}$ commutes with every quaternionic unit imaginary. 
	
It is known that $\mathbb{H}$ is a skew-field and $\mathbb{H(\mathbb{C})}$ is an associative, non-commutative complex algebra with zero divisors.
	
If $a\in\mathbb{H}$ or $a\in\mathbb{H(\mathbb{C})}$, $a$ can be represented as $a=a_{0}+\vec{a}$, with $\vec{a}=a_{1}\textbf{i}+a_{2}\textbf{j}+a_{3}\textbf{k}$,
	$\text{Sc}(a):=a_{0}$ is called the scalar part and $\text{Vec}(a):=\vec{a}$ is called the vector part of the quaternion $a$. 
	
Also, if $a\in\mathbb{H(\mathbb{C})}$, $a$ can be represented as $a=\alpha_{1}+i\alpha_{2}$ with $\alpha_{1},\,\alpha_{2}\in\mathbb{H}$.
	
Let $a,\,b\in\mathbb{H(\mathbb{C})}$, the product between these quaternions can be calculated by the formula:
	\begin{equation} \label{pc2}
	ab=a_{0}b_{0}-\langle\vec{a},\vec{b}\rangle+a_{0}\vec{b}+b_{0}\vec{a}+[\vec{a},\vec{b}],
	\end{equation}
where
	\begin{equation} \label{proint}
	\langle\vec{a},\vec{b}\rangle:=\sum_{k=1}^{3} a_{k}b_{k}, \quad
	[\vec{a},\vec{b}]:= \left|\begin{matrix}
	\textbf{i} & \textbf{j} & \textbf{k}\\
	a_{1} & a_{2} & a_{3}\\
	b_{1} & b_{2} & b_{3}
	\end{matrix}\right|.
	\end{equation}
We define the conjugate of $a=a_{0}+\vec{a}\in\mathbb{H(\mathbb{C})}$ by $\overline{a}:=a_{0}-\vec{a}$.
	
The Euclidean norm of a quaternion $a\in\mathbb{H}$ is the number $\abs{a}$ given by: 
	\begin{equation}\label{normar}
	\abs{a}=\sqrt{a\overline{a}}=\sqrt{\overline{a}a}.
	\end{equation}
	
We define the quaternionic norm of $a\in\mathbb{H(\mathbb{C})} $ by:
	\begin{equation}
	\abs{a}_{c}:=\sqrt{{{\abs {a_{0}}}_{\mathbb{C}}}^{2}+{{\abs {a_{1}}}_{\mathbb{C}}}^{2}+{{\abs {a_{2}}}_{\mathbb{C}}}^{2}+{{\abs {a_{3}}}_{\mathbb{C}}}^{2}},
	\end{equation}
where ${\abs {a_{k}}}_{\mathbb{C}}$ denotes the complex norm of each component of the quaternion $a$.The norm of a complex quaternion $a=a_{1}+ia_{2}$ with $a_{1}, a_{2} \in \mathbb{H}$ can be rewritten in the form
	\begin{equation} \label{nc2}
	{\abs{a}_{c}}=\sqrt{\abs{\alpha_{1}}^2+\abs{\alpha_{2}}^2}.
	\end{equation}
	
If $a \in \mathbb{H}$, $b \in \mathbb{H(\mathbb{C})}$, then
	\begin{equation}
	{\abs{ab}}_{c}=\abs{a}{\abs{b}}_{c}.
	\end{equation}
	
If $a\in\mathbb{H(\mathbb{C})}$ is not a zero divisor then $\displaystyle a^{-1}:=\frac{\overline{a}}{a\overline{a}}$ is the inverse of the complex quaternion $a$.	
\begin{subsection}{Notations} 
	\begin{itemize} 
		\item We say that $f:\Omega \rightarrow \mathbb{H(\mathbb{C}})$ has properties in $\Omega$ such as continuity and real differentiability of order $p$ whenever all $f_{j}$  have these properties. These spaces are usually denoted by $C^{p}(\Omega,\, \mathbb{H(\mathbb{C})})$ with $p\in \mathbb{N}\cup\{0\}$.
		\item Throughout this work, $\text{Lip}_{\mu}(\Omega,\, \mathbb{H(\mathbb{C})})$, $0<\mu\leq 1$, denotes the set of H\"older continuous functions $f:\Omega \rightarrow \mathbb{H(\mathbb{C}})$ with H\"older exponent $\mu$. By abuse of notation, when $f_{0}=0$ we write $\mathbf{Lip}_{\mu}(\Omega,\, \mathbb{C}^{3})$ instead of $\text{Lip}_{\mu}(\Omega,\, \mathbb{H(\mathbb{C})})$.
	\end{itemize}
\end{subsection}
	
In this paper, we consider the structural set $\psi^\theta:=\{\textbf{i},\,  \textbf{i}e^{\textbf{i}\theta}\textbf{j},\, e^{\textbf{i}\theta}\textbf{j}\}$ for $\theta\in[0,2\pi)$ fixed, and the associated operators ${^{\psi^\theta}}D$ and $D{^{\psi^\theta}}$ on $C^{1}(\Omega,\, \mathbb{H(\mathbb{C})})$ defined by
\begin{equation}
{^{\psi^{\theta}}}D[f]:=\textbf{i}\frac{\partial f}{\partial x_{1}}+\textbf{i}e^{\textbf{i}\theta}\textbf{j}\frac{\partial f}{\partial x_{2}}+e^{\textbf{i}\theta}\textbf{j}\frac{\partial f}{\partial x_{3}},
\end{equation}
\begin{equation}
D{^{\psi^\theta}}[f]:=\frac{\partial f}{\partial x_{1}}\textbf{i}+\frac{\partial f}{\partial x_{2}}\textbf{i}e^{\textbf{i}\theta}\textbf{j}+\frac{\partial f}{\partial x_{3}} e^{\textbf{i}\theta}\textbf{j},
\end{equation}
which linearize the Laplace operator $\Delta_{\mathbb{R}^{3}}$ in the sense that
\begin{equation}
{^{\psi^{\theta}}}D^{2}= \left[D{^{\psi^\theta}}\right]^{2}=-\Delta_{\mathbb{R}^{3}}.
\end{equation}
All functions belong to $\ker \left({^{\psi^{\theta}}}D\right) := \left\{f : {^{\psi^{\theta}}}D[f]=0\right\}$ are called left-$\psi^{\theta}$-hyperholomorphic in $\Omega$. Similarly, those functions which belong to $\ker \left(D{^{\psi^{\theta}}}\right):= \left\{f : D{^{\psi^{\theta}}}[f]=0\right\}$ will be called right-$\psi^{\theta}$-hyperholomorphic in $\Omega$. For a deeper discussion of the hyperholomorphic function theory we refer the reader to \cite{KVS}.

The function 
\begin{equation} \label{kernel}
\mathscr{K}_{\psi^{\theta}}(x):=-\frac{1}{4\pi}\frac{(x)_{\psi^{\theta}}}{\abs{x}^3}, \quad x\in\mathbb{R}^{3}\setminus\{0\},
\end{equation}
where
\begin{equation}
(x)_{\psi^{\theta}}:=x_{1}\textbf{i}+x_{2} \textbf{i}e^{\textbf{i}\theta}\textbf{j}+x_{3}e^{\textbf{i}\theta}\textbf{j},
\end{equation}
is a both-side-$\psi^{\theta}$-hyperholomorphic fundamental solution of $^{\psi^{\theta}}D$. Observe that $\abs{(x)_{\psi^{\theta}}}=\abs{x}$ for all $ x \in \mathbb{R}^{3}$.		

For $f=f_{0}+\mathbf{f}\in C^1(\Omega,\mathbb{H(\mathbb{C})})$ let us define
\begin{equation}
{^{\psi^{\theta}}}\text{div}[\mathbf{f}]:=\frac{\partial f_{1}}{\partial x_{1}}+\left({\frac{\partial f_{2}}{\partial x_{2}}-\frac{\partial f_{3}}{\partial x_{3}}}\right)\textbf{i}e^{\textbf{i}\theta},
\end{equation}
\begin{equation}
{^{\psi^{\theta}}}\text{grad}[f_{0}]:=\frac{\partial f_{0}}{\partial x_{1}}\textbf{i}+\frac{\partial f_{0}}{\partial x_{2}}\textbf{i}e^{\textbf{i}\theta}\textbf{j}+\frac{\partial f_{0}}{\partial x_{3}}e^{\textbf{i}\theta}\textbf{j},
\end{equation}
\begin{equation}
\begin{split}
{^{\psi^{\theta}}}\text{rot}[\mathbf{f}]:=\left({-\frac{\partial f_{3}}{\partial x_{2}}-\frac{\partial f_{2}}{\partial x_{3}}}\right)e^{\textbf{i}\theta}+\left({-\frac{\partial f_{1}}{\partial x_{3}}\textbf{i}e^{\textbf{i}\theta}-\frac{\partial f_{3}}{\partial x_{1}}}\right)\textbf{j} +\left({\frac{\partial f_{2}}{\partial x_{1}}-\frac{\partial f_{1}}{\partial x_{2}}\textbf{i}e^{\textbf{i}\theta}}\right)\textbf{k}.
\end{split}
\end{equation}
The action of	${^{\psi^{\theta}}}D$ on $f\in C^1(\Omega, \, \mathbb{H(\mathbb{C})})$ yields
\begin{equation}
{^{\psi^{\theta}}}D[f]=-{^{\psi^{\theta}}}\text{div}[\mathbf{f}]+{^{\psi^{\theta}}}\text{grad}[f_{0}]+	{^{\psi^{\theta}}}\text{rot}[\mathbf{f}],
\end{equation}
which implies that $f \in \ker ({^{\psi^{\theta}}}D) $ is equivalent to
\begin{equation} \label{eq1}
-{^{\psi^{\theta}}}\text{div}[\mathbf{f}]+{^{\psi^{\theta}}}\text{grad}[f_{0}]+	{^{\psi^{\theta}}}\text{rot}[\mathbf{f}]=0.
\end{equation}
If $f_{0}=0$, \eqref{eq1} reduces to 
\begin{equation} \label{eq2}
-{^{\psi^{\theta}}}\text{div}[\mathbf{f}]+{^{\psi^{\theta}}}\text{rot}[\mathbf{f}]=0.
\end{equation}
We check at once that \eqref{sedi} is equivalent to \eqref{eq2}.

Similar considerations apply to $D^{\psi^{\theta}}$, for this case one obtains
\begin{equation} \label{eq3}
D^{\psi^{\theta}}[f]=-{^{\overline{\psi^{\theta}}}}\text{div}[\mathbf{f}]+{^{\psi^{\theta}}}\text{grad}[f_{0}]+{^{\overline{\psi^{\theta}}}}\text{rot}[\mathbf{f}],
\end{equation}
where
\begin{equation}
{^{\overline{\psi^{\theta}}}}\text{div}[\mathbf{f}]:=\frac{\partial f_{1}}{\partial x_{1}}+\left({\frac{\partial f_{2}}{\partial x_{2}}-\frac{\partial f_{3}}{\partial x_{3}}}\right)\overline{\textbf{i}e^{\textbf{i}\theta}},
\end{equation}
\begin{equation}
\begin{split}
{^{\overline{\psi^{\theta}}}}\text{rot}[\mathbf{f}]:=\left({-\frac{\partial f_{3} }{\partial x_{2}}-\frac{\partial f_{2}}{\partial x_{3}}}\right) \overline{e^{\textbf{i}\theta}}-{\frac{\partial f_{1}}{\partial x_{3}}\overline{\textbf{i}e^{\textbf{i}\theta}\textbf{j}}+\frac{\partial f_{3}}{\partial x_{1}}}\textbf{j} -\frac{\partial f_{2}}{\partial x_{1}}\textbf{k}-\frac{\partial f_{1}}{\partial x_{2}}\overline{\textbf{i}e^{\textbf{i}\theta}\textbf{k}}.
\end{split}
\end{equation}
If $f_{0}=0$, \eqref{eq3} reduces to
\begin{equation} \label{eq4}
D^{\psi^{\theta}}[f]=-{^{\overline{\psi^{\theta}}}}\text{div}[\mathbf{f}]+{^{\overline{\psi^{\theta}}}}\text{rot}[\mathbf{f}].
\end{equation}	
It follows easily that 
\begin{equation} \label{eq5}
-{^{\overline{\psi^{\theta}}}}\text{div}[\mathbf{f}]+{^{\overline{\psi^{\theta}}}}\text{rot}[\mathbf{f}]=0,
\end{equation}
is also equivalent to \eqref{sedi}.
	\begin{lemma} \label{two-sided} Let $f=f_{0}+\mathbf{f}\in C^{1}(\Omega, \, \mathbb{H(\mathbb{C})})$. Then $f$ is both-side-$\psi^\theta$-hyperholomorphic in $\Omega$ if and only if ${^{\psi^{\theta}}}\text{grad}[f_{0}](x)\equiv 0$ in $\Omega$ and $\mathbf{f}$ is a generalized Laplacian vector field in $\Omega$.
	\begin{proof} 
The proof is based on the fact that \eqref{eq2} and \eqref{eq5} are equivalent to \eqref{sedi}.
	\end{proof}
\end{lemma}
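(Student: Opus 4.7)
My approach is to split $f = f_{0} + \mathbf{f}$ and exploit linearity of both ${^{\psi^{\theta}}}D$ and $D^{\psi^{\theta}}$. On any scalar-valued function both operators coincide with the structural gradient, so ${^{\psi^{\theta}}}D[f_{0}] = D^{\psi^{\theta}}[f_{0}] = {^{\psi^{\theta}}}\text{grad}[f_{0}]$, and consequently
\[
{^{\psi^{\theta}}}D[f] = {^{\psi^{\theta}}}\text{grad}[f_{0}] + {^{\psi^{\theta}}}D[\mathbf{f}], \qquad D^{\psi^{\theta}}[f] = {^{\psi^{\theta}}}\text{grad}[f_{0}] + D^{\psi^{\theta}}[\mathbf{f}].
\]
Both-side $\psi^{\theta}$-hyperholomorphicity of $f$ amounts to the simultaneous vanishing of these two right-hand sides.

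The sufficiency direction is immediate: assuming ${^{\psi^{\theta}}}\text{grad}[f_{0}] \equiv 0$ and that $\mathbf{f}$ satisfies \eqref{sedi}, the equivalences \eqref{eq2} $\Leftrightarrow$ \eqref{sedi} and \eqref{eq5} $\Leftrightarrow$ \eqref{sedi} already recorded in the preceding paragraphs yield ${^{\psi^{\theta}}}D[\mathbf{f}] = 0 = D^{\psi^{\theta}}[\mathbf{f}]$, and the two decompositions above close this implication.

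For the converse, I add the two vanishing identities to obtain
\[
2\,{^{\psi^{\theta}}}\text{grad}[f_{0}] + \bigl({^{\psi^{\theta}}}D[\mathbf{f}] + D^{\psi^{\theta}}[\mathbf{f}]\bigr) = 0.
\]
The crucial observation, which I expect to be the only delicate point in the whole argument, is that ${^{\psi^{\theta}}}\text{grad}[f_{0}]$ is purely vector-valued whereas the symmetric combination ${^{\psi^{\theta}}}D[\mathbf{f}] + D^{\psi^{\theta}}[\mathbf{f}]$ is purely scalar-valued. The latter follows from \eqref{pc2}: each element of $\psi^{\theta}$ is a real pure-vector quaternion, each $\partial_{x_{k}}\mathbf{f}$ is vector-valued, and for any two pure vectors one has $\vec{a}\vec{b} + \vec{b}\vec{a} = -2\langle \vec{a}, \vec{b}\rangle \in \mathbb{C}$. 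Separating scalar and vector parts of the displayed identity therefore forces ${^{\psi^{\theta}}}\text{grad}[f_{0}] = 0$ on its own, after which either original equation collapses to ${^{\psi^{\theta}}}D[\mathbf{f}] = 0$, which is precisely \eqref{sedi} by the equivalence noted above. No further technicalities should be required.
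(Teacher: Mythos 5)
Your proof is correct, and it rests on exactly the two facts the paper's one-line proof cites, namely that \eqref{eq2} and \eqref{eq5} are each equivalent to \eqref{sedi}; the sufficiency half of your argument is therefore identical in substance to the paper's. Where you go beyond the paper is in the necessity half, which the published proof leaves entirely implicit: you add the identities ${^{\psi^{\theta}}}D[f]=0$ and $D^{\psi^{\theta}}[f]=0$ and observe that $2\,{^{\psi^{\theta}}}\text{grad}[f_{0}]$ is purely vectorial while the symmetric combination ${^{\psi^{\theta}}}D[\mathbf{f}]+D^{\psi^{\theta}}[\mathbf{f}]$ is purely scalar, since each element of $\psi^{\theta}$ is a real pure-vector quaternion and \eqref{pc2} gives $\vec{a}\vec{b}+\vec{b}\vec{a}=-2\langle\vec{a},\vec{b}\rangle$; the two pieces must then vanish separately, after which either original equation collapses to ${^{\psi^{\theta}}}D[\mathbf{f}]=0$, i.e.\ to \eqref{eq2} and hence to \eqref{sedi}. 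This is a clean and fully rigorous way to close the converse, and it is arguably more transparent than the componentwise bookkeeping that the decompositions \eqref{eq1} and \eqref{eq3} would otherwise require. I see no gaps.
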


\subsection{Fractal dimension and the Whitney operator}
Let $E$ a subset in $\mathbb{R}^{3}$, we denote by $\mathcal{H}_{\lambda}(E)$ the $\lambda$-Hausdorff measure of $E$ (\cite{GJ}). 

Assume that $E$ is a bounded set, the Hausdorff dimension of $E$ (denoted by $\lambda(E)$) is the infimum $\lambda$ such that $\mathcal{H}_{\lambda}(E)<\infty$.

Frequently, the Minkowski dimension of $E$ (also called box dimension and denoted by $\alpha(E)$) is more appropriate than the Hausdorff dimension to measure the roughness of E (\cite{ABMP,ABS}).

It is known that Minkowski and Hausdorff dimensions can be equal, for example, for rectifiable surfaces (the Lipschitz image of some bounded subset of $\mathbb{R}^{2}$). But in general, if $E$ is a two-dimensional set in $\mathbb{R}^{3}$ 
\begin{equation}
2\leq \lambda(E)\leq \alpha(E)\leq3.
\end{equation}
If $2<\lambda(E)$, $E$ is called a fractal set in the Mandelbrot sense. For more information about the Hausdorff and Minkowski dimension, see \cite{FKJ,FJ}. 

Let $f\in \text{Lip}_{\mu}(\Gamma, \mathbb{H(\mathbb{C})})$, then $f=f_{1}+if_{2}$ with $f_{k}\in \text{Lip}_{\mu}(\Gamma, \mathbb{H(\mathbb{R})})$ and $\mathcal{E}_{0}(f):=\mathcal{E}_{0}(f_{1})+i\mathcal{E}_{0}(f_{2})$. Write
	\begin{equation}
	f^{w}:=\mathcal{X}\mathcal{E}_{0}(f),
	\end{equation}
where $\mathcal{E}_{0}$ is the Whitney operator and $\mathcal{X}$ denotes the characteristic function in $\Omega_{+}\cup\Gamma$.

For completeness, we recall the main lines in the construction of the Whitney decomposition $\mathcal W$ of the Jordan domain $\Omega$ with boundary $\Gamma$ by squares $Q$ of diameter $||Q||_{\mathbb{R}^{3}}$ and the notion of Whitney operator. This can be found in \cite[Ch VI]{SEM}.

Consider the lattice $\mathbb Z^{3}$ in $\mathbb R^{3}$ and the collection of closed unit cubes defined by it; let $\mathcal{M}_1$ be the mesh consisting of those unit cubes having a non-empty intersection with $\Omega$. Then, we recursively define the meshes $\mathcal{M}_k$, $k=2,3,\ldots$, each time bisecting the sides of the cubes of the previous one. The cubes in $\mathcal{M}_k$ thus have side length $2^{-k+1}$ and diameter $||Q||_{\mathbb{R}^{3}} = (\sqrt{3})\, 2^{-k+1}$. Define, for $k=2,3,\ldots$,
\begin{eqnarray*}
	\mathcal{W}^1 & := & \left \{ Q\in \mathcal{M}_1 \, | \, \mbox{$Q$ and every cube of $\mathcal{M}_1$ touching $Q$ are contained in $\Omega$} \right \}, \\
	\mathcal{W}^k & := & \left \{ Q\in \mathcal{M}_k \, | \, \mbox{$Q$ and every cube of $\mathcal{M}_k$ touching $Q$ are contained in $\Omega$} \right .\\
	& & \hspace*{50mm} \left . \mbox{and}\,\not \exists \, Q^\ast \in \mathcal{W}^{k-1}: Q \subset Q^\ast \right \},
\end{eqnarray*}
for which it can be proven that
$$
\Omega = \bigcup_{k=1}^{+\infty} \mathcal{W}^k = \bigcup_{k=1}^{+\infty} \bigcup_{Q \in \mathcal{W}^k} Q \equiv \bigcup_{Q \in \mathcal{W}} Q,
$$
all cubes $Q$ in the Whitney decomposition $\mathcal{W}$ of $\Omega$ having disjoint interiors.

We denote by $Q_{0}$ the unit cube with center at the origin and fix a $C^{\infty}$ function with the properties: $0\leq \varphi \leq 1$; $\varphi(x)=1$ if $x\in Q_{0}$; and $\varphi(x)=0 $ if $x\notin Q^*_{0}$.

Let $\varphi_{k}$ the function $\varphi(x)$ adjusted to the cube $Q_{k}\in\mathcal{W}$, that is
\begin{equation}
\varphi_{k}(x):=\varphi\bigg(\frac{x-x^{k}}{l_{k}}\bigg),
\end{equation}
where $x^{k}$ is the center of $Q_{k}$ and $l_{k}$ the common length of its sides.

Function $\varphi_{k}$ satisfies that $0\leq \varphi_{k} \leq 1$, $\varphi_{k}(x)=1$ if $x\in Q_{k}$ and $\varphi_{k}(x)=0 $ if $x\notin Q^*_{k}$. Let ${\varphi_{k}^*}(x)$ be defined for $x\in \Omega$ by
\begin{equation} \label{pdu}
{\varphi_{k}^*}(x):=\frac{\varphi_{k}(x)}{\Phi (x)},
\end{equation}
with
\begin{equation} 
\Phi(x):=\sum_{k}^{}\varphi_{k}(x)
\end{equation}
and
$\sum_{k}^{}\varphi_{k}^{*}(x)=1$ for $x\in \Omega$.

For each cube $Q_{k}$ let $p_{k}$ be a point fixed in $\Gamma$ such that $dist(Q_{k}, \Gamma)=dist(Q_{k}, p_{k})$. Then the Whitney operator is defined as follows
\begin{equation}
\mathcal{E}_{0}(f)(x):=f(x), \quad \text{if} \quad x\in \Gamma,
\end{equation}
\begin{equation} \label{suma}
\mathcal{E}_{0}(f)(x):=\sum_{k}f(p_{k})\varphi_{k}^{*}(x), \quad \text{if} \quad x\in \Omega. 
\end{equation}
Similar construction may be made for the domain  $\mathbb{R}^{3}\setminus \{\Omega\cup\Gamma\}$.

The operator $\mathcal{E}_{0}$ extends functions $f$ defined in $\Gamma$ to functions defined in $\mathbb{R}^{3}$. Its main properties are given as follows:
\begin{itemize}
\item Assume $f\in\text{Lip}_{\mu}(\Omega\cup\Gamma, \mathbb{H(\mathbb{C})})$. Then $\mathcal{E}_{0}(f) \in \text{Lip}_{\mu}(\mathbb{R}^{3}, \mathbb{H(\mathbb{C})})$ and in fact is $C^{\infty}$ in $\mathbb{R}^{3}\setminus\Gamma$, see \cite[Proposition, pag. 172]{SEM}. 
\item The following quantitative estimate holds (see \cite[(14), pag. 174]{SEM})
\begin{equation}
\absol{\frac{\partial{\mathcal{E}_{0}(f)}}{ { \partial x_{i} } } (x)}\leq c (dist(x, \Gamma))^{\mu-1}, \, \text{for}\, x \in \mathbb{R}^{3}\setminus\Gamma.
\end{equation}
\end{itemize}
It is necessary to go further and to express the essential fact that under some specific relation between $\mu$ and $\alpha(\Gamma)$ we have that
\begin{equation}\label{integrability}
{^{\psi^{\theta}}D}[f^{w}]\in L_{p}(\mathbb{R}^{3}, \mathbb{H(\mathbb{R})})\  \mbox{for}\; \displaystyle p<\frac{3-\alpha(\Gamma)}{1-\mu}.
\end{equation}
This follows in much by the same methods as \cite[Proposition 4.1]{AB}.
\section{Auxiliary results on $\psi^{\theta}$-hyperholomorphic function theory.}
It is a well-known fact that in proving the existence of the boundary value of the Cauchy transform via the Plemelj-Sokhotski formulas, the solvability of the jump problem is an easy task whenever the data is a H\"older continuous function and the boundary of the considered domain is assumed sufficiently smooth. But by far much more subtle is the case where it can be thought of as a fractal surface.  Then the standard method is no longer applicable, and it is necessary to introduce an alternative way of defining Cauchy transform, where a central role is played by the Teodorescu operator involving fractal dimensions. This is the idea behind the proofs of the following auxiliary results.
	
\begin{theorem} \label{thm 6} Let $f\in \text{Lip}_{\mu}(\Gamma,\, \mathbb{H(\mathbb{C})})$, $\displaystyle \frac{\alpha(\Gamma)}{3}<\mu \leq 1$. Then the function $f$ can be represented as $f=\left.F^{+}\right|_{\Gamma}-\left.F^{-}\right|_{\Gamma}$, where $F^{\pm}\in \text{Lip}_{\nu}(\Omega_{\pm}\cup\Gamma)\cap \ker\left( {^{\psi^{\theta}}}D\right)$ for some $\nu<\mu$, $F^{\pm}$ are given by 
    		\begin{equation}\label{tc}
    		F^{\pm}(x):=-{^{\psi^{\theta}}}T\left[{^{\psi^{\theta}}}D[f^{w}]\right](x)+ f^{w}(x), \quad x\in\big({\Omega}_{\pm}\cup\Gamma\big),
    		\end{equation}
    		where
   		\begin{equation}
   		{^{\psi^{\theta}}}T[v](x):=\int_{\Omega_{+}}{\mathscr{K}_{\psi^{\theta}}(x-\xi) \,v(\xi) }\,dm(\xi),  \quad x\in \mathbb{R}^{3}.
   		  \end{equation}
   is the well-defined Teodorescu transform for the $\mathbb{H(\mathbb{C})}$-valued function $v$, see \cite{KVS}. 
	\end{theorem}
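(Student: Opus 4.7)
The plan is to build the pair $F^\pm$ from a volume potential acting on a carefully chosen extension of the datum. First I would invoke the Whitney operator, which provides an extension $\mathcal{E}_0(f)\in \text{Lip}_\mu(\mathbb{R}^3,\mathbb{H}(\mathbb{C}))$ of $f$ that is $C^\infty$ off $\Gamma$, and set $f^w:=\mathcal{X}\,\mathcal{E}_0(f)$ so that $f^w$ agrees with $\mathcal{E}_0(f)$ on $\Omega_+\cup\Gamma$ and vanishes identically on $\Omega_-$. Because $\mu>\alpha(\Gamma)/3$, the estimate recalled in (\ref{integrability}) furnishes an exponent $p>3$ for which ${}^{\psi^\theta}D[f^w]\in L_p(\mathbb{R}^3,\mathbb{H}(\mathbb{C}))$; the crucial point is the threshold $p>3$, which is exactly what makes the Teodorescu transform ${}^{\psi^\theta}T$ of such a density Hölder continuous on all of $\mathbb{R}^3$.

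Next I would define $F^\pm$ by (\ref{tc}) and check hyperholomorphy separately on each side. In $\Omega_+$, applying ${}^{\psi^\theta}D$ to (\ref{tc}) and using the Borel-Pompeiu-type identity ${}^{\psi^\theta}D\circ {}^{\psi^\theta}T=\text{Id}$, valid on the domain of integration, makes the two terms cancel, so ${}^{\psi^\theta}D[F^+]=0$ there. In $\Omega_-$ the characteristic function forces $f^w\equiv 0$, hence $F^-(x)=-{}^{\psi^\theta}T[{}^{\psi^\theta}D[f^w]](x)$ on $\Omega_-$; since $x$ lies outside $\overline{\Omega_+}$ one may differentiate under the integral sign, and the hyperholomorphy of the kernel $\mathscr{K}_{\psi^\theta}(x-\xi)$ in $x$ away from $\xi$ then yields ${}^{\psi^\theta}D[F^-]=0$.

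For the Hölder regularity up to $\Gamma$, I would use that ${}^{\psi^\theta}T$ maps $L_p$ with $p>3$ into $\text{Lip}_\nu(\mathbb{R}^3,\mathbb{H}(\mathbb{C}))$ for some $\nu<\mu$, via a standard Hardy-Littlewood-Sobolev estimate on the weakly singular kernel $\mathscr{K}_{\psi^\theta}$; adding $\mathcal{E}_0(f)\in\text{Lip}_\mu$ preserves this class, so $F^\pm\in\text{Lip}_\nu(\Omega_\pm\cup\Gamma,\mathbb{H}(\mathbb{C}))$. Finally, the decomposition reduces to a jump computation: the volume potential ${}^{\psi^\theta}T[{}^{\psi^\theta}D[f^w]]$ is continuous across $\Gamma$, whereas the nontangential trace of $f^w$ from $\Omega_+$ equals $f$ and from $\Omega_-$ equals $0$, so subtraction yields $F^+|_\Gamma-F^-|_\Gamma=f$. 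The main obstacle, and the reason the hypothesis takes the form $\mu>\alpha(\Gamma)/3$, is calibrating the Whitney gradient bound $|\partial_i\mathcal{E}_0(f)(x)|\le c\,\text{dist}(x,\Gamma)^{\mu-1}$ against the Minkowski dimension of $\Gamma$ so as to secure the required exponent $p>3$ in (\ref{integrability}); the remaining manipulations are essentially formal once this threshold is achieved.
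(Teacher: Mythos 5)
Your proposal follows essentially the same route as the paper's proof: define $F^{\pm}$ by \eqref{tc} with $f^{w}=\mathcal{X}\mathcal{E}_{0}(f)$, use the hypothesis $\mu>\alpha(\Gamma)/3$ together with \eqref{integrability} to place ${^{\psi^{\theta}}}D[f^{w}]$ in $L_{p}$ for some $p>3$, invoke the mapping properties of the Teodorescu transform to get H\"older continuity across $\Gamma$ (so the jump of $F^{+}-F^{-}$ reduces to the jump of $f^{w}$, namely $f$), and use the right-inverse property ${^{\psi^{\theta}}}D\circ{^{\psi^{\theta}}}T=\mathrm{Id}$ on $\Omega_{+}$ (and the smoothness of the kernel off $\overline{\Omega_{+}}$) for the hyperholomorphy of $F^{\pm}$. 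The argument is correct and in fact spells out several steps the paper delegates to citations.
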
	
\begin{proof}
	Since ${f}^{w}={f}_{1}^{w}+i {f}_{2}^{w}$ with ${f}_{k}^{w}:\Omega\cup\Gamma\to\mathbb{H}$, $\displaystyle \mu>\frac{\alpha(\Gamma)}{3}$, and by (\ref{integrability})  ${^{\psi^{\theta}}}D[{f}_{k}^{w}]\in L_{p}(\Omega,\, \mathbb{H})$ for some $p\in\left( 3, \,\displaystyle\frac{3-\alpha(\Gamma)}{1-\mu}\right)$. Then the integral on the right side of \eqref{tc} exists and represents a continuous function in the whole $\mathbb{R}^{3}$ (see \cite[Theorem 2.8]{GPB}). Hence, the functions $F^{\pm}$ possess continuous extensions to the closures of the domains $\Omega_{\pm}$ and they satisfy that $\left.F^{+}\right|_{\Gamma}-\left.F^{-}\right|_{\Gamma}=f$. By the property of the Teodorescu operator to still being a right inverse to the Cauchy-Riemann operator (see \cite{KVS}, p. 73), ${^{\psi^{\theta}}}D[F^{+}]=0$ and ${^{\psi^{\theta}}}D[F^{-}]=0$ in the domains $\Omega_{\pm}$, respectively.
\end{proof}  
\begin{remark}
Uniqueness in the statement of Theorem 3.1 could be ensured introducing an additional requirement analogous to that in \cite[Theorem 6.6]{ABJ} 
\end{remark}

In the remainder of this section we assume that $\displaystyle \frac{\alpha(\Gamma)}{3}<\mu \leq 1$.
    The following results are related to the problem of extending $\psi^{\theta}$-hyperholomorphically a $\mathbb{H(\mathbb{C})}$-valued H\"older continuous function.
     \begin{theorem} \label{t1}
     	Let $f\in \text{Lip}_{\mu}(\Gamma,\mathbb{H(\mathbb{C})})$ the trace of $F\in \text{Lip}_{\mu}(\Omega_{+}\cup\Gamma,\mathbb{H(\mathbb{C})})\cap \ker\left(\left.^{\psi^{\theta}}D\right|_{\Omega_{+}}\right).$ Then
     	\begin{equation}\label{c1}
     	{^{\psi^{\theta}}}T\left.\left[{^{\psi^{\theta}}}D[f^{w}]\right]\right|_{\Gamma}=0.
     	\end{equation}
     	Conversely, if \eqref{c1} is satisfied, then $f$ is the trace of  $F\in \text{Lip}_{\nu}(\Omega_{+}\cup\Gamma,\mathbb{H(\mathbb{C})})\cap \ker\left(\left.^{\psi^{\theta}}D\right|_{\Omega_{+}}\right)$ for some $\nu<\mu$.
     	\begin{proof}
     		Sufficiency. As we can write $f=f_{1}+if_{2}$ and $F=F_{1}+iF_{2}$ with $f_{r}\in \text{Lip}_{\mu}(\Gamma,\mathbb{H(\mathbb{R})}), r=1,2$ and $F_{r}\in \text{Lip}_{\nu}(\Omega_{+}\cup\Gamma,\mathbb{H(\mathbb{R})})\cap \ker\left(\left.^{\psi^{\theta}}D\right|_{\Omega_{+}}\right)$. Then $f^{w}=f^{w}_{1}+if^{w}_{2}$ and
     		\begin{equation}
     		{^{\psi^{\theta}}}T\left[{^{\psi^{\theta}}}D[f^{w}]\right]={^{\psi^{\theta}}}T\left[{^{\psi^{\theta}}}D[f_{1}^{w}]\right]+i\;{^{\psi^{\theta}}}T\left[{^{\psi^{\theta}}}D[f_{2}^{w}]\right].
     		\end{equation}
Following \cite[Theorem 3.1]{ABMT}, let $F_{r}^*=f_{r}^{w}-F_{r}$, $\tilde{Q}_{k}$ the union of cubes of the mesh $\mathcal{M}_{k}$ intersecting $\Gamma$,  $\Omega_{k}=\Omega_{+}\setminus \tilde{Q}_{k}$, $\Delta_{k}=\Omega_{+}\setminus\Omega_{k}$ and denote by $\Gamma_{k}$ the boundary of $\Omega_{k}$. Applying the definition of $\alpha(\Gamma)$, given $\varepsilon>0$ there is a constant $C(\varepsilon)$ such that 	$\mathcal{H}^{2}(\Gamma_{k})$ (the Hausdorff measure of $\Gamma_{k}$) is less or equal than $6C(\varepsilon)2^{k(\alpha(\Gamma)-2+\varepsilon)}$.
     		
Since $F_{r}^*\in\text{Lip}_{\mu}(\Gamma,\mathbb{H(\mathbb{C})})$, $F_{r}^*|_{\Gamma}=0$ and any point of $\Gamma_{k}$ is distant by no more than $C_{1}2^{-k}$, then
     		\begin{equation*}
     		\text{max}_{\xi\in\Gamma_{k}}\abs{F_{r}^*(\xi)}\leq C_{2}2^{-\mu k}
     		\end{equation*}
     		where $C_{1}$, $C_{2}$ denoted absolute constants.
     		Therefore, for $x\in\Omega_{-}$, let $s=dist(x,\Gamma)$
     		\begin{equation*}
     		\abso{\int_{\Gamma_{k}}\mathscr{K}_{\psi^{\theta}}(\xi-x){^{\psi^{\theta}}}D[F_{r}^{*}](\xi)dS(\xi)}\leq C_{2}C(\varepsilon)\frac{6}{s^{2}}2^{(\alpha(\Gamma)-2-\mu+\varepsilon)}.
     		\end{equation*}
As $\displaystyle \frac{\alpha(\Gamma)}{3}<\mu \leq 1$ the right-hand side of the previous inequality tends to zero as $k\to \infty$. By the Stokes formula, we have that
\begin{equation*}
     		\begin{split}
     	  &\int_{\Omega_{+}}\mathscr{K}_{\psi^{\theta}}(\xi-x){^{\psi^{\theta}}}D[F_{r}^{*}](\xi)dm(\xi)=\lim_{k\to\infty}\bigg(	     \int_{\Delta_{k}}+\int_{\Omega_{k}}\bigg)\mathscr{K}_{\psi^{\theta}}(\xi-x){^{\psi^{\theta}}}D[F_{r}^{*}](\xi)dm(\xi)\\ &=\lim_{k\to\infty}\bigg(	     \int_{\Delta_{k}}\mathscr{K}_{\psi^{\theta}}(\xi-x){^{\psi^{\theta}}}D[F_{r}^{*}](\xi)dm(\xi)-\int_{\Gamma_{k}}\mathscr{K}_{\psi^{\theta}}(\xi-x){^{\psi^{\theta}}}D[F_{r}^{*}](\xi)dS(\xi)\bigg)=0.
     	  \end{split}
     		\end{equation*}
Then 
\begin{equation}
{^{\psi^{\theta}}}T\left.\left[{^{\psi^{\theta}}}D[f_{r}^{w}]\right]\right|_{\Gamma}={^{\psi^{\theta}}}T\left.\left[{^{\psi^{\theta}}}D[F_{r}]\right]\right|_{\Gamma}=0.
\end{equation}
     		
     		Necessity. If \eqref{c1} is satisfied we have
     		\begin{equation}
     		{^{\psi^{\theta}}}T\left.\left[{^{\psi^{\theta}}}D[f^{w}]\right]\right|_{\Gamma}={^{\psi^{\theta}}}T\left.\left[{^{\psi^{\theta}}}D[f_{1}^{w}]\right]\right|_{\Gamma}+i\;{^{\psi^{\theta}}}T\left.\left[{^{\psi^{\theta}}}D[f_{2}^{w}]\right]\right|_{\Gamma}=0,
     		\end{equation}
     		and we take
     		\begin{equation}
     		\begin{split}
     		F(x):=-{^{\psi^{\theta}}}T\left[{^{\psi^{\theta}}}D[f^{w}]\right](x)+ f^{w}(x), \quad x\in \Omega_{+}\cup\Gamma.
     		\end{split}
     		\end{equation}
     	\end{proof}
     \end{theorem}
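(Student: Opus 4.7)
The plan is to handle the two directions separately, both exploiting the Whitney extension $f^{w}$ together with Theorem \ref{thm 6} and the integrability bound \eqref{integrability}.

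The reverse direction (condition \eqref{c1} implies that $f$ is a trace) should be essentially immediate from Theorem \ref{thm 6}. I would simply set
\[
F(x) := -{}^{\psi^{\theta}}T\!\left[{}^{\psi^{\theta}}D[f^{w}]\right](x) + f^{w}(x), \quad x \in \Omega_{+}\cup\Gamma,
\]
which by Theorem \ref{thm 6} belongs to $\text{Lip}_{\nu}(\Omega_{+}\cup\Gamma)\cap\ker\bigl({}^{\psi^{\theta}}D|_{\Omega_{+}}\bigr)$ for some $\nu < \mu$; the hypothesis \eqref{c1} together with $f^{w}|_{\Gamma} = f$ will then give $F|_{\Gamma} = f$ as required.

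The forward direction is the substantial part. I would first write $f = f_{1}+if_{2}$ and $F = F_{1}+iF_{2}$ with real-quaternionic components, so as to reduce to proving the statement componentwise. Introducing the auxiliary function $F_{r}^{*} := f_{r}^{w} - F_{r}$, which lies in $\text{Lip}_{\mu}(\Omega_{+}\cup\Gamma,\mathbb{H})$ and vanishes on $\Gamma$, and observing that ${}^{\psi^{\theta}}D[F_{r}^{*}] = {}^{\psi^{\theta}}D[f_{r}^{w}]$ since $F_{r}$ lies in the kernel, the goal reduces to showing that ${}^{\psi^{\theta}}T\bigl[{}^{\psi^{\theta}}D[F_{r}^{*}]\bigr]|_{\Gamma}=0$. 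For this I would exhaust $\Omega_{+}$ from inside by the smooth domains $\Omega_{k} := \Omega_{+}\setminus \tilde{Q}_{k}$, where $\tilde{Q}_{k}$ is the union of cubes of the dyadic mesh $\mathcal{M}_{k}$ that meet $\Gamma$; then apply the quaternionic Borel--Pompeiu/Stokes formula on $\Omega_{k}$ to transfer the volume integral over $\Omega_{k}$ into a boundary contribution on $\Gamma_{k} := \partial\Omega_{k}$, plus a piece on $\Delta_{k} = \Omega_{+}\setminus\Omega_{k}$ that vanishes in the limit by \eqref{integrability}, and finally send $k\to\infty$.

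The hard part will be controlling the boundary integral on $\Gamma_{k}$. Two quantitative estimates should combine: first, since $F_{r}^{*}$ is $\mu$-Hölder and vanishes on $\Gamma$, while every point of $\Gamma_{k}$ lies within $O(2^{-k})$ of $\Gamma$, one gets $\max_{\Gamma_{k}}|F_{r}^{*}| \leq C\, 2^{-\mu k}$; second, by the definition of the upper Minkowski dimension, for every $\varepsilon > 0$ one has $\mathcal{H}^{2}(\Gamma_{k}) \leq C(\varepsilon)\, 2^{k(\alpha(\Gamma)-2+\varepsilon)}$. For $x\in\Omega_{-}$ at fixed positive distance from $\Gamma$ the kernel $\mathscr{K}_{\psi^{\theta}}(\cdot-x)$ is bounded, so the boundary contribution is of order $2^{k(\alpha(\Gamma)-2-\mu+\varepsilon)}$, which tends to zero because $\mu > \alpha(\Gamma)/3$ easily forces $\mu > \alpha(\Gamma)-2$ for $\varepsilon$ small. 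Continuity then propagates the vanishing from $\Omega_{-}$ to $\Gamma$. The delicate point is precisely this dimensional balance between the smoothness exponent $\mu$ and the fractal dimension $\alpha(\Gamma)$, which is exactly what the integrability condition \eqref{integrability} is tailored to secure.
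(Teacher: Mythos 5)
Your proposal follows essentially the same route as the paper: the converse direction via the formula $F=-{}^{\psi^{\theta}}T\left[{}^{\psi^{\theta}}D[f^{w}]\right]+f^{w}$ and Theorem \ref{thm 6}, and the direct direction via the componentwise reduction, the auxiliary function $F_{r}^{*}=f_{r}^{w}-F_{r}$, the exhaustion $\Omega_{k}=\Omega_{+}\setminus\tilde{Q}_{k}$, and exactly the two estimates $\max_{\Gamma_{k}}\lvert F_{r}^{*}\rvert\leq C2^{-\mu k}$ and $\mathcal{H}^{2}(\Gamma_{k})\leq C(\varepsilon)2^{k(\alpha(\Gamma)-2+\varepsilon)}$ combined through Stokes' formula for $x\in\Omega_{-}$. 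Your plan is correct, and it even states more carefully than the paper that the surviving boundary term carries $F_{r}^{*}$ itself (not its derivative) and that the vanishing is propagated from $\Omega_{-}$ to $\Gamma$ by continuity of the Teodorescu transform.
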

In the same manner next theorem can be verified 
    \begin{theorem}
    	Let $f\in \text{Lip}_{\mu}(\Gamma,\mathbb{H(\mathbb{C})})$. If $f$ is the trace of a function $F\in \text{Lip}_{\mu}(\Omega_{-}\cup\Gamma,\mathbb{H(\mathbb{C})})\cap \ker\left(\left.^{\psi^{\theta}}D\right|_{\Omega_{-}}\right)$
    	\begin{equation}\label{c2}
    	{^{\psi^{\theta}}}T\left.\left[{^{\psi^{\theta}}}D[f^{w}]\right]\right|_{\Gamma}=-f.
    	\end{equation}
    	Conversely, if \eqref{c2} is satisfied, then $f$ is the trace of a function $F\in \text{Lip}_{\nu}(\Omega_{-}\cup\Gamma,\mathbb{H(\mathbb{C})})\cap \ker\left(\left.^{\psi^{\theta}}D\right|_{\Omega_{-}}\right)$ for some $\nu<\mu$.
    \end{theorem}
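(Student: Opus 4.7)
The theorem has the same structure as Theorem \ref{t1}, with the interior domain $\Omega_+$ replaced by the exterior domain $\Omega_-$, and the authors explicitly indicate the argument should be parallel. My plan follows that template: handle sufficiency directly using $f^w\equiv 0$ on $\Omega_-$, and for necessity push the Stokes/Whitney calculation of Theorem \ref{t1} into the exterior setting via a suitable Whitney extension of $F$, combined with a Borel--Pompeiu-type computation on $\Omega_-$.

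For sufficiency, assuming \eqref{c2} I would set
\begin{equation*}
F(x) := -{^{\psi^{\theta}}}T\left[{^{\psi^{\theta}}}D[f^w]\right](x), \qquad x\in\Omega_-\cup\Gamma.
\end{equation*}
This is precisely the $F^-$ arising in Theorem \ref{thm 6}, so it is continuous on $\mathbb{R}^3$ and belongs to $\text{Lip}_\nu$ for some $\nu<\mu$. Since $f^w$ is supported in $\Omega_+\cup\Gamma$, the integrand ${^{\psi^{\theta}}}D[f^w]$ vanishes on $\Omega_-$ and $F$ is $\psi^{\theta}$-hyperholomorphic there, while \eqref{c2} gives $F|_\Gamma = f$.

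For necessity, split $f=f_1+if_2$ and $F=F_1+iF_2$ into real-quaternion parts and reduce to the case of $\mathbb{H}$-valued functions. Fix a Whitney extension $\widetilde F_r \in \text{Lip}_\mu(\mathbb{R}^3,\mathbb{H})$ of $F_r$ from $\Omega_-\cup\Gamma$, and set $G_r := f_r^w - \widetilde F_r$ on $\Omega_+\cup\Gamma$. This $G_r$ is $\text{Lip}_\mu$ and vanishes on $\Gamma$, so running the Stokes/Whitney argument in the proof of Theorem \ref{t1} verbatim (with the chain $\tilde Q_k,\Omega_k,\Delta_k,\Gamma_k$ in $\Omega_+$, the Hausdorff bound $\mathcal H^2(\Gamma_k)\le 6C(\varepsilon)2^{k(\alpha(\Gamma)-2+\varepsilon)}$ and the H\"older estimate $\max_{\Gamma_k}|G_r|\le C 2^{-\mu k}$) yields, for every $x\in\Omega_-$,
\begin{equation*}
\int_{\Omega_+}\mathscr{K}_{\psi^{\theta}}(\xi-x)\,{^{\psi^{\theta}}}D[G_r](\xi)\,dm(\xi)=0,
\end{equation*}
and consequently ${^{\psi^{\theta}}}T[{^{\psi^{\theta}}}D[f_r^w]](x) = {^{\psi^{\theta}}}T[{^{\psi^{\theta}}}D[\widetilde F_r|_{\Omega_+}]](x)$.

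The main obstacle I expect is identifying this common value with $-F_r(x)$. My plan is a second Borel--Pompeiu/Stokes computation, now on the exterior domain: truncate by a large ball $B_R$, apply the Whitney/Stokes argument on $(\Omega_-\cap B_R)$ with kernel $\mathscr{K}_{\psi^{\theta}}(\xi-x)$, exploit ${^{\psi^{\theta}}}D[\widetilde F_r]={^{\psi^{\theta}}}D[F_r]=0$ on $\Omega_-$ to eliminate the volume piece, and let first the Whitney parameter $k\to\infty$ (the inner-boundary contribution vanishes by the same H\"older/Hausdorff estimate as above, since $\widetilde F_r$ is a Whitney extension) and then $R\to\infty$. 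The delicate step is the outer spherical integral on $\partial B_R$; its vanishing requires decay of $F_r$ at infinity, which must be either read off from $F_r\in\text{Lip}_\mu(\Omega_-\cup\Gamma,\mathbb{H})$ (for example through a Kelvin-type inversion that sends $\Omega_-$ to a bounded domain) or added as an implicit normalization analogous to the condition $F^-(\infty)=0$ in Theorem \ref{thm 6}. Once the identity ${^{\psi^{\theta}}}T[{^{\psi^{\theta}}}D[\widetilde F_r|_{\Omega_+}]](x) = -F_r(x)$ is in place on $\Omega_-$, continuity at $\Gamma$ (from Theorem \ref{thm 6}) and passing to the limit $x\to t\in\Gamma$ yields ${^{\psi^{\theta}}}T[{^{\psi^{\theta}}}D[f^w]](t) = -F(t) = -f(t)$, which is exactly \eqref{c2}.
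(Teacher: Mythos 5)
Your proposal is correct and takes the route the paper intends: the authors give no proof of this theorem, saying only that it ``can be verified in the same manner'' as Theorem~3.2, and your adaptation (Whitney extension of $F$ from $\Omega_{-}\cup\Gamma$, the Stokes/Hausdorff estimate applied to $f^{w}-\widetilde F$, and an exterior Borel--Pompeiu computation truncated by large balls) is precisely that adaptation, with the converse direction handled, as in the paper, by the function $F^{-}$ of Theorem~3.1. You are also right that the decay of $F$ at infinity does not come for free: the printed statement omits the normalization $F(\infty)=0$, which is needed for the $\partial B_R$ contribution to vanish (a nonzero constant $F$ would otherwise refute the forward implication, since then ${^{\psi^{\theta}}}D[f^{w}]=0$ in $\Omega_{+}$) and which the authors do impose explicitly in the analogous vector-field version, Theorem~4.4.
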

    These two results generalize those of\cite[Theorem 3.1, Theorem 3.2]{ABMT}.   
      \begin{remark}
     	Similar results can be drawn for the case of right $\psi^{\theta}$-hyperholomorphic extensions. The only necessity being to replace in both theorems $\ker\left(\left.^{\psi^{\theta}}D\right|_{\Omega_{\pm}}\right)$ by $\ker\left(\left.D^{\psi^{\theta}}\right|_{\Omega_{\pm}}\right)$ and ${^{\psi^{\theta}}}T\left.\left[{^{\psi^{\theta}}}D[f^{w}]\right]\right|_{\Gamma}$ by $\left[D^{\psi^{\theta}}[f^{w}]\right]\, \left.{{^{\psi^{\theta}}}T}\right|_{\Gamma}$, where for every $\mathbb{H(\mathbb{C})}$-valued function $v$ we have set
     	\begin{equation}
     	[v]\, {{^{\psi^{\theta}}}T}=\int_{\Omega_{+}}{ v(\xi)\, \mathscr{K}_{\psi^{\theta}}(x-\xi) }\,dm(\xi),  \quad x\in \mathbb{R}^{3}.
     	\end{equation}
     	
     	The following theorem presents a result connecting two-sided $\psi^{\theta}$-hyperholomorphicity in the domain $\Omega_{+}$ and it is obtained by application of the previous results
     	
     	\begin{theorem}
     		If $F\in \text{Lip}_{\mu}(\Gamma,\mathbb{H(\mathbb{C})})\cap \ker\left(\left.^{\psi^{\theta}}D\right|_{\Omega_{+}}\right)$ has trace  $\left.F\right|_{\Gamma}=f$, then the following assertions are equivalent:
     		\begin{itemize}
     			\item [1.] F is left and right $\psi^{\theta}$-hyperholomorphic in $\Omega_{+}$,
     			\item [2.] ${^{\psi^{\theta}}}T\left.\left[{^{\psi^{\theta}}}D[f^{w}]\right]\right|_{\Gamma}=	\left[D^{\psi^{\theta}}[f^{w}]\right]\, \left.{{^{\psi^{\theta}}}T}\right|_{\Gamma}$.
     		\end{itemize}
     	\begin{proof}
     		The proof is obtained reasoning as in \cite[Theorem 3.3]{ABMP}.
     	\end{proof}
     	\end{theorem}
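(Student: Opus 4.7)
The plan is to reduce the equivalence to two applications of Theorem \ref{t1}, one for the left-$\psi^{\theta}$-hyperholomorphic setting already proved and one for its right-$\psi^{\theta}$-hyperholomorphic counterpart indicated in the preceding remark, and then to close the argument with a uniqueness step based on the harmonicity of $\psi^{\theta}$-hyperholomorphic functions together with the maximum principle for harmonic functions on $\Omega_{+}$.

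For the direction $(1)\Rightarrow(2)$: because $F$ is left-$\psi^{\theta}$-hyperholomorphic with trace $f$, Theorem \ref{t1} yields
\begin{equation*}
{^{\psi^{\theta}}}T\bigl[{^{\psi^{\theta}}}D[f^{w}]\bigr]\big|_{\Gamma}=0.
\end{equation*}
Applying the right-$\psi^{\theta}$-hyperholomorphic analogue of Theorem \ref{t1} noted in the preceding remark, and using that $F$ is simultaneously right-$\psi^{\theta}$-hyperholomorphic, we obtain in the same fashion
\begin{equation*}
\bigl[D^{\psi^{\theta}}[f^{w}]\bigr]\,{^{\psi^{\theta}}}T\big|_{\Gamma}=0.
\end{equation*}
Both sides of the identity in assertion~$2$ therefore vanish, and the implication is immediate.

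For the direction $(2)\Rightarrow(1)$: the hypothesis that $F$ is left-$\psi^{\theta}$-hyperholomorphic combined with Theorem \ref{t1} again forces the left-hand side of~$2$ to be zero, and then the assumed equality forces $\bigl[D^{\psi^{\theta}}[f^{w}]\bigr]\,{^{\psi^{\theta}}}T\big|_{\Gamma}=0$. The right-hand version of Theorem \ref{t1} from the preceding remark thus produces
\begin{equation*}
G\in \text{Lip}_{\nu}(\Omega_{+}\cup\Gamma,\mathbb{H(\mathbb{C})})\cap \ker\bigl(D^{\psi^{\theta}}|_{\Omega_{+}}\bigr),\qquad \nu<\mu,
\end{equation*}
whose trace on $\Gamma$ equals $f$. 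Since ${^{\psi^{\theta}}}D^{2}=[D^{\psi^{\theta}}]^{2}=-\Delta_{\mathbb{R}^{3}}$, both $F$ and $G$ are componentwise harmonic in $\Omega_{+}$, both are continuous on $\Omega_{+}\cup\Gamma$, and they agree on the boundary. The classical maximum principle applied to each real scalar component yields $F\equiv G$ in $\Omega_{+}$, whence $F$ lies in $\ker(D^{\psi^{\theta}}|_{\Omega_{+}})$ as well.

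The delicate step is the identification of $F$ and $G$ in Step~2: the two are \emph{a priori} distinct reconstructions, $F$ coming from the left-Teodorescu formula $-{^{\psi^{\theta}}}T\bigl[{^{\psi^{\theta}}}D[f^{w}]\bigr]+f^{w}$ and $G$ from its right-sided counterpart $-\bigl[D^{\psi^{\theta}}[f^{w}]\bigr]\,{^{\psi^{\theta}}}T+f^{w}$. The mismatch of H\"older exponents (a drop from $\mu$ down to $\nu$) and the absence of a Cauchy integral on $\Gamma$ rule out any direct pointwise comparison of the two integral representations near the fractal boundary. Routing the argument through harmonicity bypasses these obstructions cleanly, since the maximum principle for harmonic functions on a bounded open set requires only continuity up to $\Gamma$ and is insensitive to the fractal nature of the boundary.
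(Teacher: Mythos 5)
Your proof is correct and follows essentially the route the paper intends by its citation of \cite[Theorem 3.3]{ABMP}: apply Theorem \ref{t1} and its right-sided analogue from the preceding remark in both directions, then identify the left- and right-hyperholomorphic extensions of $f$ via componentwise harmonicity (from ${^{\psi^{\theta}}}D^{2}=[D^{\psi^{\theta}}]^{2}=-\Delta_{\mathbb{R}^{3}}$) and the maximum principle, exactly the uniqueness device the paper itself uses in its Appendix. Your closing observation that this bypasses any pointwise comparison of the two integral representations near the fractal boundary is apt, and no gap remains.
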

     	
     \end{remark}

     \section{Main results}
In this section our main results are stated and proved. They give sufficient conditions for solving the Problems $(I)$ and $(II)$.
      
Let $\mathscr{M}_{\psi^{\theta}}^{*}$ be the subclass of vector fields $\mathbf{f}\in C^{1}(\Omega, \mathbb{C}^{3})\cap\mathbf{Lip}_{\mu}(\Gamma,\, \mathbb{C}^{3})$ defined by
      \begin{equation} \label{set}
      	\mathscr{M}_{\psi^{\theta}}^{*}:=\left\{\mathbf{f}: \int_{\Omega_{+}}{\left\langle\mathscr{K}_{\psi^{\theta}}(x-\xi)\,,\,\mathbf{f}(\xi)\right\rangle}\,dm(\xi)=0, \; x\in\Gamma \right\},
      \end{equation}
      where $m$ denotes the Lebesgue measure in $\mathbb{R}^{3}$. The set $\mathscr{M}_{\psi^{\theta}}^{*}$ can be seen as a fractal version of the corresponding class in \cite{ZMS}, which  can be described in purely physical terms.
     
     \begin{theorem} \label{TH1}
     	Let $\mathbf{f}\in \mathbf{Lip}_{\mu}(\Gamma,\, \mathbb{C}^{3})$ such that $\displaystyle \mu>\frac{\alpha(\Gamma)}{3}$. Then the problem (I) is solvable if 
     	\begin{equation}
     	\begin{split}
     	\text{Vec}\left(-{^{\psi^{\theta}}}\text{div}[\mathbf{f}^{w}]+{^{\psi^{\theta}}}\text{rot}[\mathbf{f}^{w}]\right)&:=\left({\left(\frac{\partial \mathbf{f}^{w}_{3}}{\partial x_{3}}-\frac{\partial \mathbf{f}^{w}_{2}}{\partial x_{2}}\right)}\cos\theta-\left(\frac{\partial \mathbf{f}^{w}_{3}}{\partial x_{2}}+\frac{\partial \mathbf{f}^{w}_{2}}{\partial x_{3}}\right)\sin\theta\right)\textbf{i}\\ & +\left(\displaystyle {-\frac{\partial \mathbf{f}^{w}_{3}}{\partial x_{1}}+\frac{\partial \mathbf{f}^{w}_{1}}{\partial x_{3}}\sin\theta+\frac{\partial \mathbf{f}^{w}_{1}}{\partial x_{2}}\cos\theta}\right)\textbf{j}\\ & +\left(\displaystyle {\frac{\partial \mathbf{f}^{w}_{2}}{\partial x_{1}}-\frac{\partial \mathbf{f}^{w}_{1}}{\partial x_{3}}\cos\theta+\frac{\partial \mathbf{f}^{w}_{1}}{\partial x_{2}}\sin\theta}\right)\textbf{k}\in\mathscr{M}_{\psi^{\theta}}^{*}.
     	\end{split}
     	\end{equation}
     	\begin{proof}
     		It is enough to prove that
     		\begin{equation}
     		\mathbf{F^{\pm}}(x):=-{^{\psi^{\theta}}}T\left[{^{\psi^{\theta}}}D[\mathbf{f}^{w}]\right](x)+ \mathbf{f}^{w}(x), \quad x\in\big({\Omega}_{\pm}\cup\Gamma\big),
     		\end{equation}
     		are vector fields.
     		
     		Observe that
     		\begin{equation}
     		\notag
     		\text{Sc}\left({^{\psi^{\theta}}}T\left[{^{\psi^{\theta}}}D[\mathbf{f}^{w}]\right]\right)(x)=-\int_{\Omega_{+}}{\left\langle \mathscr{K}_{\psi^{\theta}}(x-\xi),\text{Vec}\left(	-{^{\psi^{\theta}}}\text{div}[\mathbf{f}^{w}]+{^{\psi^{\theta}}}\text{rot}[\mathbf{f}^{w}]\right) \right\rangle }\,dm(\xi), \quad x\in \Omega_{\pm},
     		\end{equation}
     		\begin{equation}
     		\notag
     		\Delta\left(\text{Sc}\left({^{\psi^{\theta}}}T\left[{^{\psi^{\theta}}}D[\mathbf{f}^{w}]\right]\right)\right)(x)=0, \quad x\in \Omega_{\pm}
     		\end{equation}
     		and
     		\begin{equation}
     		\notag
     		\text{Sc}\left.\left({^{\psi^{\theta}}}T\left[{^{\psi^{\theta}}}D[\mathbf{f}^{w}]\right]\right)\right|_{\Gamma}=0,
     		\end{equation}
     		because $\text{Vec}\left(	-{^{\psi^{\theta}}}\text{div}[\mathbf{f}^{w}]+{^{\psi^{\theta}}}\text{rot}[\mathbf{f}^{w}]\right)\in\mathscr{M}_{\psi^{\theta}}^{*}$. Therefore $\text{Sc}\left({^{\psi^{\theta}}}T\left[{^{\psi^{\theta}}}D[\mathbf{f}^{w}]\right]\right)\equiv 0$ in $\Omega_{\pm}$. Then $\mathbf{F^{\pm}}(x)$
     are vector fields.
     	\end{proof}
     \end{theorem}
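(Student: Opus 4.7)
The approach is to apply Theorem \ref{thm 6} directly to the vector field $\mathbf{f}$, obtaining $\psi^{\theta}$-hyperholomorphic functions $\mathbf{F}^{\pm}$ in $\Omega_{\pm}$ satisfying $\mathbf{F}^{+}|_{\Gamma} - \mathbf{F}^{-}|_{\Gamma} = \mathbf{f}$, with $\mathbf{F}^{-}$ decaying at infinity by the decay of the Teodorescu kernel. Setting $\vec{f}^{+} := \mathbf{F}^{+}|_{\Gamma}$ and $\vec{f}^{-} := -\mathbf{F}^{-}|_{\Gamma}$ recasts the Plemelj-type jump as the sum required in Problem $(I)$, so the entire task reduces to verifying that $\mathbf{F}^{\pm}$ are purely vector valued: by Lemma \ref{two-sided} together with the equivalence between \eqref{sedi} and \eqref{eq2}, a left-$\psi^{\theta}$-hyperholomorphic function with vanishing scalar part is automatically a generalized Laplacian vector field.

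Since the Whitney extension acts componentwise, $\mathbf{f}^{w}$ is itself purely vector, so it suffices to show $\text{Sc}({^{\psi^{\theta}}}T[{^{\psi^{\theta}}}D[\mathbf{f}^{w}]]) \equiv 0$ on $\Omega_{+}\cup\Omega_{-}$. The kernel $\mathscr{K}_{\psi^{\theta}}$ is purely vector because $(x)_{\psi^{\theta}}$ is, and the quaternionic product rule \eqref{pc2} specialized to a purely vector first factor gives
\[
\text{Sc}\bigl(\mathscr{K}_{\psi^{\theta}}(x-\xi)\,v(\xi)\bigr) = -\bigl\langle \mathscr{K}_{\psi^{\theta}}(x-\xi),\, \text{Vec}(v(\xi))\bigr\rangle.
\]
Inserting $v = {^{\psi^{\theta}}}D[\mathbf{f}^{w}]$ and using the div-rot decomposition \eqref{eq2} (valid since $\mathbf{f}^{w}$ has zero scalar part) produces an integral of $\langle \mathscr{K}_{\psi^{\theta}}(x-\xi),\, \text{Vec}(-{^{\psi^{\theta}}}\text{div}[\mathbf{f}^{w}]+{^{\psi^{\theta}}}\text{rot}[\mathbf{f}^{w}])\rangle$ over $\Omega_{+}$; by the hypothesis that this vector-valued integrand lies in $\mathscr{M}_{\psi^{\theta}}^{*}$, the resulting function vanishes identically on $\Gamma$.

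To extend the vanishing from $\Gamma$ to the interior I would argue by harmonicity plus a maximum principle. In $\Omega_{-}$ the integrand is smooth in $x$, so differentiating under the integral and using $\Delta_{x}\mathscr{K}_{\psi^{\theta}}(x-\xi)=0$ for $\xi\neq x$ shows the scalar part is harmonic there. In $\Omega_{+}$, where the kernel is singular, I would instead use the identity $-{^{\psi^{\theta}}}T[{^{\psi^{\theta}}}D[\mathbf{f}^{w}]] = \mathbf{F}^{+} - \mathbf{f}^{w}$ and take scalar parts: since $\mathbf{f}^{w}$ has trivial scalar part, this equals $\text{Sc}(\mathbf{F}^{+})$, which is harmonic because each component of the $\psi^{\theta}$-hyperholomorphic $\mathbf{F}^{+}$ is harmonic via $({^{\psi^{\theta}}}D)^{2} = -\Delta$. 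The maximum principle in the bounded $\Omega_{+}$, and its analogue combined with decay at infinity in $\Omega_{-}$, then force the scalar part to vanish identically, whence $\mathbf{F}^{\pm}$ are vector fields and the proof is complete.

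The main obstacle I anticipate is confirming harmonicity of the scalar part inside $\Omega_{+}$, precisely because the integrand is singular there; the maneuver of recognizing the scalar part as $\text{Sc}(\mathbf{F}^{+})$ offloads the problem to the fact that hyperholomorphic functions have harmonic components, after which the explicit scalar-part computation via \eqref{pc2} and the boundary-to-interior propagation by the maximum principle are routine.
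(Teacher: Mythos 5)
Your proposal is correct and follows essentially the same route as the paper: reduce Problem $(I)$ via Theorem \ref{thm 6} to showing that $\text{Sc}\left({^{\psi^{\theta}}}T\left[{^{\psi^{\theta}}}D[\mathbf{f}^{w}]\right]\right)$ vanishes, express that scalar part as $-\int_{\Omega_{+}}\left\langle \mathscr{K}_{\psi^{\theta}}(x-\xi),\text{Vec}\left(-{^{\psi^{\theta}}}\text{div}[\mathbf{f}^{w}]+{^{\psi^{\theta}}}\text{rot}[\mathbf{f}^{w}]\right)\right\rangle dm(\xi)$, note it is harmonic in $\Omega_{\pm}$ and vanishes on $\Gamma$ by the $\mathscr{M}_{\psi^{\theta}}^{*}$ hypothesis, and conclude by the maximum principle. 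Your treatment of harmonicity inside $\Omega_{+}$ (identifying the scalar part with $\text{Sc}(\mathbf{F}^{+})$ to avoid the kernel singularity) supplies a justification the paper merely asserts, but it is the same argument.
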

     
     \begin{theorem} \label{TH2}
     	Let $\mathbf{f}$ $\in \mathbf{Lip}_{\mu}(\Gamma,\, \mathbb{C}^{3})$ such that $\displaystyle \mu>\frac{\alpha(\Gamma)}{3}$ and suppose that\\ $\text{Vec}\left(	-{^{\psi^{\theta}}}\text{div}[\mathbf{f}^{w}]+{^{\psi^{\theta}}}\text{rot}[\mathbf{f}^{w}]\right)\in\mathscr{M}_{\psi^{\theta}}^{*}$. If $\bf{f}$ is the trace of a generalized Laplacian vector field in $\mathbf{Lip}_{\mu}(\Omega_{+}\cup\Gamma,\, \mathbb{C}^{3})$, then
     		\begin{equation}\label{c3}
     		\begin{split}
     			&\int_{\Omega_{+}}{\mathscr{K}_{\psi^{\theta}}(t-\xi)\; \text{Sc}\left(-{^{\psi^{\theta}}}\text{div}[\mathbf{f}^{w}]+{^{\psi^{\theta}}}\text{rot}[\mathbf{f}^{w}]\right) }dm(\xi)\\ &=\int_{\Omega_{+}}{\left[\mathscr{K}_{\psi^{\theta}}(t-\xi)\, ,\,\text{Vec}\left(-{^{\psi^{\theta}}}\text{div}[\mathbf{f}^{w}]+{^{\psi^{\theta}}}\text{rot}[\mathbf{f}^{w}]\right)\right] }dm(\xi),  \quad t\in \Gamma,
     		\end{split}
     	\end{equation}
     	where
     	\begin{equation}
     	\begin{split}
     	\text{Sc}\left(-{^{\psi^{\theta}}}\text{div}[\mathbf{f}^{w}]+{^{\psi^{\theta}}}\text{rot}[\mathbf{f}^{w}]\right)&=-\frac{\partial \mathbf{f}^{w}_{1}}{\partial x_{1}}+\left(\frac{\partial \mathbf{f}^{w}_{2}}{\partial x_{2}}-\frac{\partial \mathbf{f}^{w}_{3}}{\partial x_{3}}\right)\sin\theta-\left(\frac{\partial \mathbf{f}^{w}_{3}}{\partial x_{2}}+\frac{\partial \mathbf{f}^{w}_{2}}{\partial x_{3}}\right)\cos\theta.
     	\end{split}
     	\end{equation}
     	Conversely, if \eqref{c3} is satisfied, then $\bf{f}$ is the trace of a generalized Laplacian vector field in $\mathbf{Lip}_{\nu}(\Omega_{+}\cup\Gamma,\, \mathbb{C}^{3})$ for some $\nu<\mu$.
     		\begin{proof}
     		Suppose that  $\mathbf{f}$ $\in \mathbf{Lip}_{\mu}(\Gamma,\, \mathbb{C}^{3})$ is the trace of a generalized Laplacian vector field in $\mathbf{Lip}_{\mu}(\Omega_{+}\cup\Gamma,\, \mathbb{C}^{3})$. Therefore  
     		\begin{equation*}
     		{^{\psi^{\theta}}}T\left.\left[{^{\psi^{\theta}}}D[\mathbf{f}^{w}]\right]\right|_{\Gamma}=0,
     		\end{equation*}
			by Theorem \ref{t1}.
     		Of course
     		\begin{equation*}
     		\begin{split}
     		&\int_{\Omega_{+}}{\mathscr{K}_{\psi^{\theta}}(t-\xi)\; \text{Sc}\left(-{^{\psi^{\theta}}}\text{div}[\mathbf{f}^{w}]+{^{\psi^{\theta}}}\text{rot}[\mathbf{f}^{w}]\right) }\,dm(\xi)\\ &=\int_{\Omega_{+}}{\left[\mathscr{K}_{\psi^{\theta}}(t-\xi)\, ,\,\text{Vec}\left(-{^{\psi^{\theta}}}\text{div}[\mathbf{f}^{w}]+{^{\psi^{\theta}}}\text{rot}[\mathbf{f}^{w}]\right)\right] }\,dm(\xi),  \quad t\in \Gamma,
     		\end{split}
     		\end{equation*} 
				as is easy to check.
				
     		Now, if \eqref{c3} is satisfied. Set
     		\begin{equation}
     		\mathbf{F^{+}}(x):=-{^{\psi^{\theta}}}T\left[{^{\psi^{\theta}}}D[\mathbf{f}^{w}]\right](x)+ \mathbf{f}^{w}(x), \quad x\in\big(\Omega_{+}\cup\Gamma\big).
     		\end{equation}
     		As $\text{Vec}\left(	-{^{\psi^{\theta}}}\text{div}[\mathbf{f}^{w}]+{^{\psi^{\theta}}}\text{rot}[\mathbf{f}^{w}]\right)\in\mathscr{M}_{\psi^{\theta}}^{*}$, $\mathbf{F^{+}}$ is a generalized Laplacian vector field in $\Omega_{+}$. By Theorem \ref{thm 6}, $\left.\mathbf{F^{+}}\right|_{\Gamma}=\mathbf{f}$, which completes the proof.
     	\end{proof}
     \end{theorem}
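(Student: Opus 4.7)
The plan is to identify the vector field $\mathbf{f}$ with the purely vectorial quaternion $f := 0 + \mathbf{f}$; by \eqref{eq2}, $\mathbf{f}$ is a generalized Laplacian vector field in a domain if and only if $f$ is left-$\psi^\theta$-hyperholomorphic there. Under this identification Theorems \ref{thm 6} and \ref{t1} become directly applicable, and the statement of the present theorem should be recovered by splitting the single quaternion-valued condition \eqref{c1} of Theorem \ref{t1} into its scalar and vector components: the scalar component will correspond to the standing hypothesis $\text{Vec}(-{^{\psi^{\theta}}}\text{div}[\mathbf{f}^{w}]+{^{\psi^{\theta}}}\text{rot}[\mathbf{f}^{w}])\in\mathscr{M}_{\psi^{\theta}}^{*}$, while the vector component will be precisely \eqref{c3}.

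For the direct implication, I would apply Theorem \ref{t1} to $f = 0 + \mathbf{f}$ to obtain ${^{\psi^{\theta}}}T\left[{^{\psi^{\theta}}}D[\mathbf{f}^{w}]\right]\big|_{\Gamma} = 0$. Because $\mathbf{f}^{w}$ is purely vectorial, \eqref{eq2} gives ${^{\psi^{\theta}}}D[\mathbf{f}^{w}] = -{^{\psi^{\theta}}}\text{div}[\mathbf{f}^{w}] + {^{\psi^{\theta}}}\text{rot}[\mathbf{f}^{w}]$. Expanding the pointwise product $\mathscr{K}_{\psi^{\theta}}(t-\xi)\,({^{\psi^{\theta}}}D[\mathbf{f}^{w}])(\xi)$ via \eqref{pc2}, and using that $\mathscr{K}_{\psi^{\theta}}$ is purely vectorial, I would isolate the scalar and vector parts of the integrand. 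Integrating over $\Omega_{+}$ and imposing vanishing on $\Gamma$, the scalar part reproduces the standing $\mathscr{M}_{\psi^{\theta}}^{*}$ condition, while the vector part rearranges into \eqref{c3}.

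Conversely, I would assume \eqref{c3} and the $\mathscr{M}_{\psi^{\theta}}^{*}$ hypothesis jointly; recombining them through \eqref{pc2} yields ${^{\psi^{\theta}}}T\left[{^{\psi^{\theta}}}D[\mathbf{f}^{w}]\right]\big|_{\Gamma} = 0$. Setting
\begin{equation*}
\mathbf{F}^{+}(x) := -{^{\psi^{\theta}}}T\left[{^{\psi^{\theta}}}D[\mathbf{f}^{w}]\right](x) + \mathbf{f}^{w}(x), \qquad x\in\Omega_{+}\cup\Gamma,
\end{equation*}
Theorem \ref{thm 6} guarantees $\mathbf{F}^{+}\in \text{Lip}_{\nu}(\Omega_{+}\cup\Gamma,\mathbb{H(\mathbb{C})})\cap\ker({^{\psi^{\theta}}}D)$ for some $\nu<\mu$ with $\mathbf{F}^{+}\big|_{\Gamma}=\mathbf{f}$. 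What remains is to verify that $\mathbf{F}^{+}$ is a genuine vector field, i.e.\ $\text{Sc}(\mathbf{F}^{+})\equiv 0$ in $\Omega_{+}$. I would follow the argument used in the proof of Theorem \ref{TH1}: $\text{Sc}(\mathbf{F}^{+})$ is harmonic in $\Omega_{+}$ (both $\mathbf{f}^{w}$ and the Teodorescu term having no scalar contribution to the Laplacian of $\mathbf{F}^{+}$, which is itself ${^{\psi^{\theta}}}D$-hyperholomorphic hence harmonic) and vanishes on $\Gamma$ thanks to the $\mathscr{M}_{\psi^{\theta}}^{*}$ hypothesis; the maximum principle on the bounded Jordan domain $\Omega_{+}$ then forces it to vanish identically. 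Consequently $\mathbf{F}^{+}$ is left-$\psi^{\theta}$-hyperholomorphic with zero scalar part, which by \eqref{eq2} is equivalent to being a generalized Laplacian vector field in $\Omega_{+}$.

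The main obstacle I anticipate is the careful bookkeeping in the decomposition via \eqref{pc2}: both $\mathscr{K}_{\psi^{\theta}}$ and ${^{\psi^{\theta}}}D[\mathbf{f}^{w}]$ carry nontrivial vectorial structure, and the cross-product term in the vector part of the product must reproduce \emph{exactly} the commutator on the right-hand side of \eqref{c3} with the correct sign convention. Once this identification is made, both directions follow mechanically from Theorems \ref{thm 6} and \ref{t1}.
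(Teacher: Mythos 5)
Your proposal is correct and follows essentially the same route as the paper: both directions reduce to the quaternionic trace condition ${^{\psi^{\theta}}}T\left[{^{\psi^{\theta}}}D[\mathbf{f}^{w}]\right]\big|_{\Gamma}=0$ of Theorem \ref{t1}, split via \eqref{pc2} into a scalar part (the $\mathscr{M}_{\psi^{\theta}}^{*}$ hypothesis) and a vector part (condition \eqref{c3}), with the converse handled by the Cauchy-type function $\mathbf{F}^{+}$ and the harmonicity-plus-maximum-principle argument of Theorem \ref{TH1} to kill the scalar part. You merely make explicit some steps the paper leaves as ``easy to check.''
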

 
     The method of proof carries to domain $\Omega_{-}$. Indeed, we have
     \begin{theorem} \label{TH3}
     	Let $\mathbf{f}\in \mathbf{Lip}_{\mu}(\Gamma,\, \mathbb{C}^{3})$ such that $\displaystyle \mu>\frac{\alpha(\Gamma)}{3}$ and suppose that\\ $\text{Vec}\left(	-{^{\psi^{\theta}}}\text{div}[\mathbf{f}^{w}]+{^{\psi^{\theta}}}\text{rot}[\mathbf{f}^{w}]\right)\in\mathscr{M}_{\psi^{\theta}}^{*}$. If $\bf{f}$ is the trace of a generalized Laplacian vector field in $\mathbf{Lip}_{\mu}(\Omega_{-}\cup\Gamma,\, \mathbb{C}^{3})$ which vanishes at infinity, then
     \begin{equation}\label{c4}
     \begin{split}
     &\int_{\Omega_{+}}{\mathscr{K}_{\psi^{\theta}}(t-\xi)\; \text{Sc}\left(-{^{\psi^{\theta}}}\text{div}[\mathbf{f}^{w}]+{^{\psi^{\theta}}}\text{rot}[\mathbf{f}^{w}]\right) }\,dm(\xi)\\ &-\int_{\Omega_{+}}{\left[\mathscr{K}_{\psi^{\theta}}(t-\xi)\, ,\,\text{Vec}\left(-{^{\psi^{\theta}}}\text{div}[\mathbf{f}^{w}]+{^{\psi^{\theta}}}\text{rot}[\mathbf{f}^{w}]\right)\right] }\,dm(\xi)=-\mathbf{f}(t),  \quad t\in \Gamma.
     \end{split}
     \end{equation}
     	Conversely, if \eqref{c4} is satisfied, then $\bf{f}$ is the trace of a generalized Laplacian vector field in $\mathbf{Lip}_{\nu}(\Omega_{-}\cup\Gamma,\, \mathbb{C}^{3})$ for some $\nu<\mu$, which vanishes at infinity.
     \end{theorem}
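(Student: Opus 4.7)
The strategy is to mirror the proof of Theorem~\ref{TH2}, replacing the interior trace identity \eqref{c1} by its exterior analogue \eqref{c2} supplied by the $\Omega_{-}$ companion of Theorem~\ref{t1}. Throughout I exploit that $\mathscr{K}_{\psi^{\theta}}$ is purely vectorial, so that the quaternionic product formula \eqref{pc2} applied to the integrand $\mathscr{K}_{\psi^{\theta}}(t-\xi)\,{^{\psi^{\theta}}}D[\mathbf{f}^{w}](\xi)$ splits into a scalar contribution $-\langle\mathscr{K}_{\psi^{\theta}},\text{Vec}(\cdot)\rangle$ and a vector contribution $\mathscr{K}_{\psi^{\theta}}\,\text{Sc}(\cdot)+[\mathscr{K}_{\psi^{\theta}},\text{Vec}(\cdot)]$, where hereafter $(\cdot):=-{^{\psi^{\theta}}}\text{div}[\mathbf{f}^{w}]+{^{\psi^{\theta}}}\text{rot}[\mathbf{f}^{w}]$. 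For the direct implication, if $\mathbf{f}=\mathbf{F}^{-}|_{\Gamma}$ with $\mathbf{F}^{-}$ a generalized Laplacian vector field on $\Omega_{-}$ vanishing at infinity, then Lemma~\ref{two-sided} places $\mathbf{F}^{-}\in\ker({^{\psi^{\theta}}}D|_{\Omega_{-}})$, whence \eqref{c2} yields ${^{\psi^{\theta}}}T\left.\left[{^{\psi^{\theta}}}D[\mathbf{f}^{w}]\right]\right|_{\Gamma}=-\mathbf{f}$; matching scalar parts on $\Gamma$ reproduces the standing hypothesis $\text{Vec}(\cdot)\in\mathscr{M}^{*}_{\psi^{\theta}}$, and matching vector parts against $-\mathbf{f}(t)$ is precisely equation~\eqref{c4}.

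For the converse I shall set
\[
\mathbf{F}^{-}(x):=-{^{\psi^{\theta}}}T\left[{^{\psi^{\theta}}}D[\mathbf{f}^{w}]\right](x)+\mathbf{f}^{w}(x),\qquad x\in\Omega_{-}\cup\Gamma,
\]
and verify four items: (i) $\mathbf{F}^{-}$ is a pure vector field in $\Omega_{-}$, by the Theorem~\ref{TH1} argument that the hypothesis $\text{Vec}(\cdot)\in\mathscr{M}^{*}_{\psi^{\theta}}$ forces the scalar part of ${^{\psi^{\theta}}}T\left[{^{\psi^{\theta}}}D[\mathbf{f}^{w}]\right]$ to vanish identically; (ii) $\mathbf{F}^{-}\in\ker({^{\psi^{\theta}}}D|_{\Omega_{-}})$, by Theorem~\ref{thm 6}, which together with Lemma~\ref{two-sided} promotes $\mathbf{F}^{-}$ to a generalized Laplacian vector field; (iii) $\mathbf{F}^{-}|_{\Gamma}=\mathbf{f}$, obtained by recombining \eqref{c4} through the product formula into ${^{\psi^{\theta}}}T\left.\left[{^{\psi^{\theta}}}D[\mathbf{f}^{w}]\right]\right|_{\Gamma}=-\mathbf{f}$, and then invoking that $\mathbf{f}^{w}\equiv 0$ on $\Omega_{-}$ by the characteristic-function factor $\mathcal{X}$ in the definition of $f^{w}$, so that the boundary limit from $\Omega_{-}$ gives $\mathbf{F}^{-}|_{\Gamma}=-(-\mathbf{f})+0=\mathbf{f}$; and (iv) $\mathbf{F}^{-}(\infty)=0$, since $\mathbf{f}^{w}$ is compactly supported while $|\mathscr{K}_{\psi^{\theta}}(x-\xi)|=O(|x|^{-2})$ uniformly for $\xi\in\Omega_{+}$. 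The drop of H\"older exponent to some $\nu<\mu$ is inherited from the mapping property of ${^{\psi^{\theta}}}T$ already used in Theorem~\ref{thm 6}.

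The main point requiring care is (iii). In the interior Theorem~\ref{TH2}, $\mathbf{f}^{w}$ contributes $\mathbf{f}$ to the trace directly from the $\Omega_{+}$ side and the trace identity has right-hand side $0$; here, by contrast, the trace limit taken from $\Omega_{-}$ sees $\mathbf{f}^{w}$ vanish, so the entire boundary value must be generated by $-{^{\psi^{\theta}}}T\left.\left[{^{\psi^{\theta}}}D[\mathbf{f}^{w}]\right]\right|_{\Gamma}$. This asymmetry is exactly what accounts for the right-hand side $-\mathbf{f}(t)$ of~\eqref{c4} versus the $0$ appearing in~\eqref{c3}. Once this bookkeeping is properly carried out, together with the decay check in (iv), the remaining verifications transcribe the interior argument almost verbatim.
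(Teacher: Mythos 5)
Your proposal is correct and follows essentially the same route the paper intends: Theorem \ref{TH3} is proved by transplanting the argument of Theorem \ref{TH2} to the exterior domain, i.e.\ the direct part via the $\Omega_{-}$ trace criterion \eqref{c2} together with the scalar/vector splitting of the product $\mathscr{K}_{\psi^{\theta}}(t-\xi)\,{^{\psi^{\theta}}}D[\mathbf{f}^{w}](\xi)$ under the standing hypothesis $\text{Vec}\left(-{^{\psi^{\theta}}}\text{div}[\mathbf{f}^{w}]+{^{\psi^{\theta}}}\text{rot}[\mathbf{f}^{w}]\right)\in\mathscr{M}_{\psi^{\theta}}^{*}$, and the converse via the representation $-{^{\psi^{\theta}}}T\left[{^{\psi^{\theta}}}D[\mathbf{f}^{w}]\right]+\mathbf{f}^{w}$ combined with Theorem \ref{thm 6}. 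Your explicit bookkeeping of the vanishing of $\mathbf{f}^{w}$ on $\Omega_{-}$ (which is exactly what produces the $-\mathbf{f}(t)$ on the right of \eqref{c4} in contrast with \eqref{c3}) and your verification of the decay at infinity supply details that the paper leaves implicit.
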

\begin{remark} The mains results of this paper are generalizations of those in \cite{ABMP}, where is considered the operator Moisil-Teodorescu
	\begin{equation}
	D_{MT}:=\textbf{i}\frac{\partial }{\partial x_{1}}+\textbf{j}\frac{\partial }{\partial x_{2}}+\textbf{k}\frac{\partial }{\partial x_{3}}.
	\end{equation}
	Applying the operator $D_{MT}$ to $\mathbf{h}^{w}:=\mathbf{f}^{w}_{1}\textbf{i}+\mathbf{f}^{w}_{2}\textbf{j}+\mathbf{f}^{w}_{3}\textbf{k}\in C^{1}(\Omega, \mathbb{C}^{3})\cap\mathbf{Lip}_{\mu}(\Gamma,\, \mathbb{C}^{3})$ we get
	\begin{equation}
	\begin{split}
	D_{MT}[\mathbf{h}^{w}]&=-\text{div}[\mathbf{h}^{w}]+\text{rot}[\mathbf{h}^{w}]\\ &=-\frac{\partial \mathbf{f}^{w}_{1}}{\partial x_{1}}-\frac{\partial \mathbf{f}^{w}_{2}}{\partial x_{2}}-\frac{\partial \mathbf{f}^{w}_{3}}{\partial x_{3}}+\left(\frac{\partial \mathbf{f}^{w}_{3}}{\partial x_{2}}-\frac{\partial \mathbf{f}^{w}_{2}}{\partial x_{3}}\right)\textbf{i}\\ & +\left(\displaystyle {\frac{\partial \mathbf{f}^{w}_{1}}{\partial x_{3}}-\frac{\partial \mathbf{f}^{w}_{3}}{\partial x_{1}}}\right)\textbf{j}+\left(\displaystyle {\frac{\partial \mathbf{f}^{w}_{2}}{\partial x_{1}}-\frac{\partial \mathbf{f}^{w}_{1}}{\partial x_{2}}}\right)\textbf{k}.
	\end{split}
	\end{equation}
	For abbreviation, we let $D_{MT}[\mathbf{h}^{w}]$ stand for
	\begin{equation}  \label{2}
	\begin{split}
	D_{MT}[\mathbf{h}^{w}]=\left[D_{MT}[\mathbf{h}^{w}]\right]_{0}+\left[D_{MT}[\mathbf{h}^{w}]\right]_{1}\textbf{i} +\left[D_{MT}[\mathbf{h}^{w}]\right]_{2}\textbf{j}+\left[D_{MT}[\mathbf{h}^{w}]\right]_{3}\textbf{k}.
	\end{split}
	\end{equation}
On the other hand, setting $\mathbf{f}^{w}:=\mathbf{f}^{w}_{1}\textbf{i}+\mathbf{f}^{w}_{3}\textbf{j}+\mathbf{f}^{w}_{2}\textbf{k}\in C^{1}(\Omega, \mathbb{C}^{3})\cap\mathbf{Lip}_{\mu}(\Gamma,\, \mathbb{C}^{3})$ it follows that
	\begin{equation}
	\begin{split}
	 {^{\psi^{0}}D}[\mathbf{f}^{w}]&=-\frac{\partial \mathbf{f}^{w}_{1}}{\partial x_{1}}-\frac{\partial \mathbf{f}^{w}_{2}}{\partial x_{2}}-\frac{\partial \mathbf{f}^{w}_{3}}{\partial x_{3}}+\left({\frac{\partial \mathbf{f}^{w}_{2}}{\partial x_{3}}-\frac{\partial \mathbf{f}^{w}_{3}}{\partial x_{2}}}\right)\textbf{i}\\ & +\left(\displaystyle {\frac{\partial \mathbf{f}^{w}_{1}}{\partial x_{2}}-\frac{\partial \mathbf{f}^{w}_{2}}{\partial x_{1}}}\right)\textbf{j}+\left(\displaystyle {\frac{\partial \mathbf{f}^{w}_{3}}{\partial x_{1}}-\frac{\partial \mathbf{f}^{w}_{1}}{\partial x_{3}}}\right)\textbf{k}.
	 \end{split}
	 \end{equation}
 The above expression may be written as
 \begin{equation} \label{1}
 \begin{split}
 {^{\psi^{0}}D}[\mathbf{f}^{w}]=\left[{^{\psi^{0}}D}[\mathbf{f}^{w}]\right]_{0}+\left[{^{\psi^{0}}D}[\mathbf{f}^{w}]\right]_{1}\textbf{i}+\left[{^{\psi^{0}}D}[\mathbf{f}^{w}]\right]_{2}\textbf{j}+\left[{^{\psi^{0}}D}[\mathbf{f}^{w}]\right]_{3}\textbf{k}.
 \end{split}
 \end{equation}
It is worth noting that under the correspondence $\left(\mathbf{f}^{w}_{1},\,\mathbf{f}^{w}_{2},\,\mathbf{f}^{w}_{3}\right)\, \leftrightarrow \, \left(\mathbf{f}^{w}_{1},\,\mathbf{f}^{w}_{3},\,\mathbf{f}^{w}_{2}\right)$ we can assert that
\begin{equation}\label{equiv}
D_{MT}[\mathbf{h}^{w}]=0\,\Longleftrightarrow \, {}{^{\psi^{0}}D}[\mathbf{f}^{w}]=0,
\end{equation}
which follow from 
 \begin{align*}
 \left[D_{MT}[\mathbf{h}^{w}]\right]_{0} &=\left[{^{\psi^{0}}D}[\mathbf{f}^{w}]\right]_{0} ,\\
\left[D_{MT}[\mathbf{h}^{w}]\right]_{1} & =- \left[{^{\psi^{0}}D}[\mathbf{f}^{w}]\right]_{1},\\
\left[D_{MT}[\mathbf{h}^{w}]\right]_{2} & =-\left[{^{\psi^{0}}D}[\mathbf{f}^{w}\right]_{3}, \\
 \left[D_{MT}[\mathbf{h}^{w}]\right]_{3} & =-\left[{^{\psi^{0}}D}[\mathbf{f}^{w}]\right]_{2}.
 \end{align*}
\end{remark}
\begin{remark}
	In \cite{ABMP} is defined 
		\begin{equation}
	\mathscr{M}^{*}:=\left\{\mathbf{f}: \frac{1}{4\pi}\int_{\Omega_{+}}{\left\langle  \text{grad}\;\frac{1}{\abs{t-\xi}}\, ,\,\mathbf{f}(\xi)\right\rangle}\,dm(\xi)=0, \, t\in\Gamma \right\}.
	\end{equation}
	For $\mathbf{h}:=\mathbf{f_{1}}\textbf{i}+\mathbf{f_{2}}\textbf{j}+\mathbf{f_{3}}\textbf{k} \in \mathscr{M}^{*}$ it is clear that
	\begin{equation}
	\begin{split}
	 \frac{1}{4\pi}\int_{\Omega_{+}}{\left\langle  \text{grad}\;\frac{1}{\abs{t-\xi}}\, ,\,\mathbf{h}(\xi)\right\rangle}\,dm(\xi)=\int_{\Omega_{+}}{\left\langle\mathscr{K}_{\psi^{0}}(t-\xi)\, ,\,\mathbf{f}(\xi)\right\rangle}\,dm(\xi)=0,
	 \end{split}
	\end{equation}
where $\mathbf{f}:=\mathbf{f}_{1}\textbf{i}+\mathbf{f}_{3}\textbf{j}+\mathbf{f}_{2}\textbf{k} \in \mathscr{M}^{*}_{\psi^{0}}$. Hence 
$$\mathbf{h}:=\mathbf{f}_{1}\textbf{i}+\mathbf{f}_{2}\textbf{j}+\mathbf{f}_{3}\textbf{k}\in \mathscr{M}^{*} \iff \mathbf{f}:=\mathbf{f}_{1}\textbf{i}+\mathbf{f}_{3}\textbf{j}+\mathbf{f}_{2}\textbf{k} \in \mathscr{M}^{*}_{\psi^{0}}.$$
	\end{remark}
From Theorems \ref{TH1}, \ref{TH2}, \ref{TH3} and the previous remarks the followings corollaries are obtained.
\begin{corollary} \cite[Theorem 2.2]{ABMP}.
	Let $\mathbf{f}\in \mathbf{Lip}_{\mu}(\Gamma,\, \mathbb{C}^{3})$ such that $\displaystyle \mu>\frac{\alpha(\Gamma)}{3}$. Then the reconstruction problem for the div-rot system is solvable if $\text{rot}[\mathbf{f}^{w}]\in\mathscr{M}^{*}$.
\end{corollary}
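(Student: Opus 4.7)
The plan is to deduce the corollary from Theorem \ref{TH1} by specializing to $\theta=0$ and then interpreting both sides of the statement through the dictionary provided by the two preceding remarks. All of the analytic content---the Teodorescu extension, the Whitney operator estimates, and the condition $\mu>\alpha(\Gamma)/3$---is already packaged inside Theorem \ref{TH1}, so only an algebraic translation is needed.

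First I would set $\theta=0$ in Theorem \ref{TH1}: the hypothesis becomes $\text{Vec}\bigl(-{^{\psi^{0}}}\text{div}[\mathbf{f}^{w}]+{^{\psi^{0}}}\text{rot}[\mathbf{f}^{w}]\bigr)\in\mathscr{M}_{\psi^{0}}^{*}$, and the conclusion is that problem $(I)$ for $^{\psi^{0}}D$ is solvable. By the equivalence \eqref{equi}, a $\mathbb{C}^{3}$-valued vector field lies in $\ker(^{\psi^{0}}D)$ precisely when it solves the classical div--rot system, once the triples are matched by the correspondence $(\mathbf{f}_{1},\mathbf{f}_{2},\mathbf{f}_{3})\leftrightarrow(\mathbf{f}_{1},\mathbf{f}_{3},\mathbf{f}_{2})$ used in the last two remarks. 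Consequently, Theorem \ref{TH1} at $\theta=0$ already produces the desired $^{\psi^{0}}$-hyperholomorphic extensions, which, read back through \eqref{equi}, are exactly the div--rot vector fields required for the reconstruction problem.

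Next I would bring the hypothesis $\text{rot}[\mathbf{f}^{w}]\in\mathscr{M}^{*}$ into the form demanded by Theorem \ref{TH1}. The first remark, through the componentwise relations listed after \eqref{1}, identifies $D_{MT}[\mathbf{h}^{w}]$ with $^{\psi^{0}}D[\mathbf{f}^{w}]$ up to the 2--3 swap and a sign change in the vector part, so their vector parts $\text{rot}[\mathbf{h}^{w}]$ and $\text{Vec}\bigl(-{^{\psi^{0}}}\text{div}[\mathbf{f}^{w}]+{^{\psi^{0}}}\text{rot}[\mathbf{f}^{w}]\bigr)$ are related by the same swap. The second remark gives the matching identification $\mathscr{M}^{*}\longleftrightarrow\mathscr{M}^{*}_{\psi^{0}}$. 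Composing these two bijections transports the assumption $\text{rot}[\mathbf{f}^{w}]\in\mathscr{M}^{*}$ precisely onto the hypothesis of Theorem \ref{TH1} at $\theta=0$, and the corollary is established. The only point that calls for care---and the main, if clerical, obstacle---is keeping the sign and index conventions consistent across both remarks, so that no spurious mismatch arises when the two correspondences are composed; beyond this, no further analytic work is required.
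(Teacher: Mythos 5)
Your proposal is correct and follows essentially the same route as the paper, which derives the corollaries precisely by specializing Theorem \ref{TH1} to $\theta=0$ and invoking the two remarks that identify $D_{MT}[\mathbf{h}^{w}]$ with ${^{\psi^{0}}}D[\mathbf{f}^{w}]$ (via the $2$--$3$ swap and a sign change in the vector part) and $\mathscr{M}^{*}$ with $\mathscr{M}^{*}_{\psi^{0}}$. Your closing worry about signs is harmless, since membership in $\mathscr{M}^{*}$ is defined by the vanishing of an integral and is therefore insensitive to the overall sign introduced by the componentwise relations.
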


\begin{corollary} \cite[Theorem 2.3]{ABMP}.
	Let $\mathbf{f}\in \mathbf{Lip}_{\mu}(\Gamma,\, \mathbb{C}^{3})$ such that $\displaystyle \mu>\frac{\alpha(\Gamma)}{3}$ and suppose that $\text{rot}[\mathbf{f}^{w}]\in\mathscr{M}^{*}$. If $\bf{f}$ is the trace of a Laplacian vector field in $\mathbf{Lip}_{\mu}(\Omega_{+}\cup\Gamma,\, \mathbb{C}^{3})$, then
	\begin{equation}\label{c31}
		\begin{split}
			&\frac{1}{4\pi}\int_{\Omega_{+}}{ \text{grad}\;\frac{1}{\abs{t-\xi}}\; \text{div}[\mathbf{f}^{w}]}\,dm(\xi)\\ &=\frac{1}{4\pi}\int_{\Omega_{+}}{\left[ \text{grad}\;\frac{1}{\abs{t-\xi}}\,,\, \text{rot}[\mathbf{f}^{w}]\right] }\,dm(\xi),  \quad t\in \Gamma.
		\end{split}
	\end{equation}
	Conversely, if \eqref{c31} is satisfied, then $\bf{f}$ is the trace of a Laplacian vector field in $\mathbf{Lip}_{\nu}(\Omega_{+}\cup\Gamma,\, \mathbb{C}^{3})$ for some $\nu<\mu$.
\end{corollary}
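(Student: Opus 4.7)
The plan is to obtain this corollary as a direct specialization of Theorem \ref{TH2} to $\theta=0$, together with the two preceding remarks, which act as dictionaries between the classical Moisil--Teodorescu framework and the $\psi^{0}$-hyperholomorphic framework. First I would fix $\theta=0$, so that $\psi^{0}=\{\mathbf{i},\mathbf{k},\mathbf{j}\}$ and the correspondence $\mathbf{h}=h_{1}\mathbf{i}+h_{2}\mathbf{j}+h_{3}\mathbf{k}\longleftrightarrow h_{1}\mathbf{i}+h_{3}\mathbf{j}+h_{2}\mathbf{k}$ furnished by \eqref{equi} and by the remark identifying $D_{MT}[\mathbf{h}^{w}]=0$ with ${}^{\psi^{0}}D[\mathbf{f}^{w}]=0$ converts ``$\mathbf{f}$ is the trace of a Laplacian vector field in $\Omega_{+}$'' into ``the associated quaternion is the trace of a generalized Laplacian vector field in the sense of the paper''. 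This lets me feed the corollary's hypothesis directly into Theorem \ref{TH2}.

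Next I would translate the set-theoretic hypothesis $\text{rot}[\mathbf{f}^{w}]\in\mathscr{M}^{*}$. The remark on $\mathscr{M}^{*}$ vs $\mathscr{M}^{*}_{\psi^{0}}$ states that the swap $h_{1}\mathbf{i}+h_{2}\mathbf{j}+h_{3}\mathbf{k}\mapsto h_{1}\mathbf{i}+h_{3}\mathbf{j}+h_{2}\mathbf{k}$ is a bijection $\mathscr{M}^{*}\to\mathscr{M}^{*}_{\psi^{0}}$. Using the component-wise identities $[D_{MT}\mathbf{h}^{w}]_{k}=\pm[{}^{\psi^{0}}D\mathbf{f}^{w}]_{\sigma(k)}$ of the preceding remark together with the explicit formula in Theorem \ref{TH2}, a direct check shows that, for $\theta=0$, $\text{Vec}(-{}^{\psi^{0}}\text{div}[\mathbf{f}^{w}]+{}^{\psi^{0}}\text{rot}[\mathbf{f}^{w}])$ is, up to an overall sign and the $\mathbf{j}\leftrightarrow\mathbf{k}$ swap, precisely the classical $\text{rot}[\mathbf{f}^{w}]$. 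Since $\mathscr{M}^{*}$ and $\mathscr{M}^{*}_{\psi^{0}}$ are linear subspaces, these sign and index changes are absorbed, and the hypothesis of the corollary is equivalent to the hypothesis of Theorem \ref{TH2}.

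Finally, I would push equation \eqref{c3} through the same dictionary to obtain \eqref{c31}. For $\theta=0$ the scalar part $\text{Sc}(-{}^{\psi^{0}}\text{div}[\mathbf{f}^{w}]+{}^{\psi^{0}}\text{rot}[\mathbf{f}^{w}])$ reduces to $-\text{div}[\mathbf{f}^{w}]$, the bracket $[\cdot,\cdot]$ defined in \eqref{proint} coincides with the Euclidean cross product, and $\mathscr{K}_{\psi^{0}}(t-\xi)$ is identified (under the same swap) with $\tfrac{1}{4\pi}\,\text{grad}_{t}\tfrac{1}{|t-\xi|}$. Substituting these identifications into \eqref{c3} yields exactly \eqref{c31}; conversely, \eqref{c31} feeds back through the same substitutions into the converse half of Theorem \ref{TH2}, producing the desired extension $\mathbf{F}\in\mathbf{Lip}_{\nu}(\Omega_{+}\cup\Gamma,\mathbb{C}^{3})$ with $\mathbf{F}|_{\Gamma}=\mathbf{f}$. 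The main obstacle is purely notational: one must carefully keep track of the $\mathbf{j}\leftrightarrow\mathbf{k}$ swap and the accompanying sign changes coming from $\psi^{0}=\{\mathbf{i},\mathbf{k},\mathbf{j}\}$; once this bookkeeping is settled, no further argument is required.
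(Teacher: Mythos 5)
Your proposal is correct and follows exactly the route the paper intends: the paper offers no separate proof of this corollary, stating only that it is ``obtained'' from Theorem \ref{TH2} and the two preceding remarks, and your write-up simply makes that specialization to $\theta=0$ (with the $\mathbf{j}\leftrightarrow\mathbf{k}$ swap, the identification of $\mathscr{K}_{\psi^{0}}$ with $\tfrac{1}{4\pi}\operatorname{grad}\tfrac{1}{|t-\xi|}$, and the $\mathscr{M}^{*}\leftrightarrow\mathscr{M}^{*}_{\psi^{0}}$ dictionary) explicit.
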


\begin{corollary} \cite[Theorem 2.4]{ABMP}.
	Let $\mathbf{f}\in\mathbf{Lip}_{\mu}(\Gamma,\, \mathbb{C}^{3})$ such that $\displaystyle \mu>\frac{\alpha(\Gamma)}{3}$ and suppose that $\text{rot}[\mathbf{f}^{w}]\in\mathscr{M}^{*}$. If $\bf{f}$ is the trace of a Laplacian vector field in $\mathbf{Lip}_{\mu}(\Omega_{-}\cup\Gamma, \, \mathbb{C}^{3})$ which vanishes at infinity, then
	\begin{equation}\label{c42}
		\begin{split}
			&\frac{1}{4\pi}\int_{\Omega_{+}}{ \text{grad}\;\frac{1}{\abs{t-\xi}}\;\text{div}[\mathbf{f}^{w}]}\,dm(\xi)\\ &-\frac{1}{4\pi}\int_{\Omega_{+}}{\left[ \text{grad}\;\frac{1}{\abs{t-\xi}}\,,\,\text{rot}[\mathbf{f}^{w}]\right] }\,dm(\xi)=-\mathbf{f}(t),  \quad t\in \Gamma.
		\end{split}
	\end{equation}
	Conversely, if \eqref{c42} is satisfied, then $\bf{f}$ is the trace of a Laplacian vector field in $\mathbf{Lip}_{\nu}(\Omega_{-}\cup\Gamma,\, \mathbb{C}^{3})$ for some $\nu<\mu$, which vanishes at infinity.
\end{corollary}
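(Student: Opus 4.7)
The plan is to derive this corollary as the classical ($\theta=0$) specialization of Theorem \ref{TH3}, translated from $\psi^{0}$-hyperholomorphic to Moisil-Teodorescu language via the two remarks immediately preceding the corollaries. Given the classical vector field $\mathbf{f}=(\mathbf{f}_{1},\mathbf{f}_{2},\mathbf{f}_{3})$ of the hypothesis, I would introduce the component-swapped quaternionic field $\tilde{\mathbf{f}}^{w}:=\mathbf{f}_{1}^{w}\textbf{i}+\mathbf{f}_{3}^{w}\textbf{j}+\mathbf{f}_{2}^{w}\textbf{k}$ (that is, $\tilde{\mathbf{f}}$ plays the role of the ``$\mathbf{f}^{w}$'' in the first remark, while $\mathbf{f}^{w}$ itself plays the role of ``$\mathbf{h}^{w}$'' there). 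By the equivalence \eqref{equiv}, $\mathbf{f}$ is the trace of a classical div-rot Laplacian vector field on $\Omega_{-}\cup\Gamma$ vanishing at infinity if and only if $\tilde{\mathbf{f}}$ is the trace of a generalized Laplacian vector field with $\theta=0$ on $\Omega_{-}\cup\Gamma$ vanishing at infinity; by the second remark, $\text{rot}[\mathbf{f}^{w}]\in\mathscr{M}^{*}$ if and only if $\text{Vec}\bigl(-{^{\psi^{0}}}\text{div}[\tilde{\mathbf{f}}^{w}]+{^{\psi^{0}}}\text{rot}[\tilde{\mathbf{f}}^{w}]\bigr)\in\mathscr{M}^{*}_{\psi^{0}}$. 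Theorem \ref{TH3} applied to $\tilde{\mathbf{f}}$ with $\theta=0$ then yields its conclusion \eqref{c4}, and the task reduces to rewriting this identity as the classical identity \eqref{c42}.

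To perform the rewriting I would verify term by term that \eqref{c4} at $\theta=0$ collapses onto \eqref{c42}. Substituting $\sin 0=0$ and $\cos 0=1$ in the formula for $\text{Sc}\bigl(-{^{\psi^{0}}}\text{div}+{^{\psi^{0}}}\text{rot}\bigr)$ from Theorem \ref{TH2} and evaluating on $\tilde{\mathbf{f}}^{w}$ produces $-\partial_{1}\mathbf{f}_{1}^{w}-\partial_{2}\mathbf{f}_{2}^{w}-\partial_{3}\mathbf{f}_{3}^{w}=-\text{div}[\mathbf{f}^{w}]$, which is precisely the scalar factor in the first integral of \eqref{c42}. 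An analogous expansion of the three vector components of $-{^{\psi^{0}}}\text{div}[\tilde{\mathbf{f}}^{w}]+{^{\psi^{0}}}\text{rot}[\tilde{\mathbf{f}}^{w}]$ recovers $\text{rot}[\mathbf{f}^{w}]$ up to the same $\textbf{j}\leftrightarrow\textbf{k}$ swap and the sign flips tabulated at the end of the first remark. Finally, the kernel
\[
\mathscr{K}_{\psi^{0}}(x)=-\frac{1}{4\pi}\frac{x_{1}\textbf{i}+x_{2}\textbf{k}+x_{3}\textbf{j}}{|x|^{3}}
\]
is obtained from $\frac{1}{4\pi}\,\text{grad}\frac{1}{|x|}=-\frac{1}{4\pi}\frac{x_{1}\textbf{i}+x_{2}\textbf{j}+x_{3}\textbf{k}}{|x|^{3}}$ by the very same permutation. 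Hence, by the product formula \eqref{pc2}, the scalar-times-vector integrand $\mathscr{K}_{\psi^{0}}(t-\xi)\,\text{Sc}(\cdots)$ in the first integral of \eqref{c4} matches the first integrand of \eqref{c42}; and by \eqref{proint}, the commutator integrand $[\mathscr{K}_{\psi^{0}}(t-\xi),\text{Vec}(\cdots)]$ in the second integral of \eqref{c4} matches the cross-product integrand of the second integral of \eqref{c42}. Putting these substitutions together converts \eqref{c4} into \eqref{c42}.

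The converse is then immediate: if \eqref{c42} holds for $\mathbf{f}$, the same dictionary produces \eqref{c4} for $\tilde{\mathbf{f}}$ at $\theta=0$, so the converse half of Theorem \ref{TH3} yields an extension of $\tilde{\mathbf{f}}$ as a generalized Laplacian vector field in $\mathbf{Lip}_{\nu}(\Omega_{-}\cup\Gamma,\mathbb{C}^{3})$ for some $\nu<\mu$, vanishing at infinity; undoing the component swap through the first remark gives the required classical Laplacian extension of $\mathbf{f}$ with the same smoothness and decay. The only genuine obstacle is notational bookkeeping: the passage between $D_{MT}$ and ${^{\psi^{0}}}D$ involves not just the index swap $2\leftrightarrow 3$ but also the sign flips on the $\textbf{i},\textbf{j},\textbf{k}$-coefficients listed at the end of the first remark, and both effects must be tracked simultaneously through the kernel, through the commutator bracket, and through $\text{rot}[\mathbf{f}^{w}]$ so that the sign and placement of every term in \eqref{c42} line up correctly with the corresponding term in \eqref{c4}.
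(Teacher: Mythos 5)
Your proposal is correct and follows exactly the route the paper intends: the paper derives this corollary in one line from Theorem \ref{TH3} together with the two preceding remarks, and your argument simply carries out that specialization to $\theta=0$ explicitly, with the component swap $\mathbf{f}_2\leftrightarrow\mathbf{f}_3$, the sign table from the first remark, and the identification of $\mathscr{K}_{\psi^{0}}$ with the permuted $\frac{1}{4\pi}\,\mathrm{grad}\,\frac{1}{|x|}$ all handled correctly. You in fact supply more of the bookkeeping than the paper does, but the underlying approach is the same.
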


\section*{Acknowledgements}
D. Gonz\'alez-Campos gratefully acknowledges the financial support of the Postgraduate Study Fellowship of the Consejo Nacional de Ciencia y Tecnolog\'ia (CONACYT) (grant number 818693). J. Bory-Reyes and M. A. P\'erez-de la Rosa were partially supported by Instituto Polit\'ecnico Nacional in the framework of SIP programs (SIP20211188) and by Fundaci\'on Universidad de las Am\'ericas Puebla, respectively.

\section*{Appendix. Criteria for the generalized Laplacianness of a vector field}
We continue to assume that $\Omega\subset \mathbb{R}^{3}$ is a Jordan domain with a fractal boundary $\Gamma$. Our interest here is to find necessary and sufficient conditions for the generalized Laplacianness of an vector field $\mathbf{F}\in\mathbf{Lip}_{\nu}(\Omega\cup\Gamma,\, \mathbb{C}^{3})$ in terms of its boundary value $\mathbf {f}:=\left.\mathbf {F}\right|_\Gamma$.
	
The inspiration for the following definition is that in \cite[Definition 2.1]{ARBR}.
	\begin{defi} \label{dtc1}
		Let $\Omega$ a Jordan domain with fractal boundary $\Gamma$. Then we define the Cauchy transform of $\mathbf{f}\in \mathbf{Lip}_{\mu}(\Gamma,\,\mathbb{C}^{3})$ by	
		\begin{equation}\label{tc2}
			K_{\Gamma}^{*}[\mathbf{f}](x):=-{^{\psi^{\theta}}}T\left[{^{\psi^{\theta}}}D[\mathbf{f}^{w}]\right](x)+ \mathbf{f}^{w}(x), \quad x\in \mathbb{R}^{3}\setminus\Gamma.
		\end{equation}
	\end{defi}
Under condition $\displaystyle \frac{\alpha(\Gamma)}{3}<\mu \leq 1$ the Cauchy transform $K_{\Gamma}^{*}[\mathbf{f}]$ has continuous extension to $\Omega\cup\Gamma$ for every vector field $\mathbf {f}\in \mathbf{Lip}_{\mu}(\Gamma,\,\mathbb{C}^{3})$ (take a fresh look at Theorem \ref{thm 6}). On the other hand, using the properties of the Theodorescu operator (see \cite{KVS}, p. 73) we obtain that $K_{\Gamma}^{*}[\bf{f}]$ is left-$\psi^{\theta}$-hyperholomorphic in $\mathbb{R}^{3}\setminus\Gamma$. Note that $K_{\Gamma}^{*}[\mathbf{f}](x)$ vanishes at infinity. 
	
Let us introduce the following fractal version of the Cauchy singular integral operator 
	\begin{equation*}
		\mathcal{S}_{\Gamma}^{*}[\mathbf{f}](x):=2K^{*}_{\Gamma}[\mathbf{f}]^{+}(x)-f(x), \quad x\in\Gamma.
	\end{equation*}
	Here and subsequently, $K^{*}_{\Gamma}[\mathbf{f}]^{+}$ denotes the trace on $\Gamma$ of the continuous extension of $K^{*}_{\Gamma}[\mathbf{f}]$ to $\Omega\cup\Gamma$.
	
Let us now establish and prove the main result of this appendix, which gives necessary and sufficient conditions for the generalized Laplacianness of a vector field in terms of its boundary value.
	\begin{theorem}
		Let $\mathbf{F}\in\mathbf{Lip}_{\mu}(\Omega\cup\Gamma,\mathbb{C}^{3})$ with trace $\mathbf{f}=\left.\mathbf{F}\right|_{\Gamma}$. Then the following sentences are equivalent:
		\begin{itemize}
			\item [(i)] $\mathbf{F}$ is a generalized Laplacian vector field.
			\item [(ii)] $\mathbf{F}$ is harmonic in $\Omega$ and $\mathcal{S}_{\Gamma}^{*}[\mathbf{f}]=\mathbf{f}$.
		\end{itemize}
		\begin{proof} Let $\mathbf{F}^{w}$ be the Whitney extension of $\mathbf{F}$ in $\mathbf{Lip}_{\mu}(\Omega\cup\Gamma,\mathbb{C}^{3})$. Suppose that $\mathbf{F}$ is a generalized Laplacian vector field in $\Omega$. Since ${^{\psi^{\theta}}}D[\mathbf{F}]=0$ in $\Omega$, it follows that $\mathbf{F}$ is harmonic. Also $\mathbf{F}^{w}$ is a Whitney extension of $\mathbf{f},$ i.e. $\mathbf{f}=\left.\mathbf{F}^{w}\right|_{\Gamma}$.
			According to Definition \ref{dtc1}, with $\mathbf{f}^{w}$ replaced by $\mathbf{F}^{w}$, we get
			\begin{equation*}
				K_{\Gamma}^{*}[\mathbf{f}](x)=-\int_{\Omega}{\mathscr{K}_{\psi^{\theta}}(x-\xi)\, {{^{\psi^{\theta}}}D}[\mathbf{F}^{w}](\xi) }\,dm(\xi)+ \mathbf{F}^{w}(x)=\mathbf{F}(x), \quad x\in\Omega,
			\end{equation*}
which imply that $K_{\Gamma}^{*}[\mathbf{f}]^{+}=\mathbf{f}$ and $\mathcal{S}_{\Gamma}^{*}[\mathbf{f}]=\mathbf{f}$.
			
Conversely, assume that $(ii)$ holds and define
			\begin{equation}
				\Psi(x):= \left\{
				\begin{array}{ll}
					K_{\Gamma}^{*}[\mathbf{f}](x),  &  x \in\Omega, \\
					\mathbf{f}(x), &  x \in \Gamma.
				\end{array}
				\right.
			\end{equation}
			Note that $\Psi(x)$ is left-$\psi^{\theta}$-hyperholomorphic function, hence harmonic in $\Omega$. Since $\mathcal{S}_{\Gamma}^{*}[\mathbf{f}]=\mathbf{f}$ in $\Gamma$, it follows that $K_{\Gamma}^{*}[\mathbf{f}]^{+}=\mathbf{f}$. Therefore $K_{\Gamma}^{*}[\mathbf{f}]$ is also continuous on $\Omega\cup\Gamma$.
			
			As $\mathbf{F}-\Psi$ is harmonic in $\Omega$ and $\left.(\mathbf{F}-\Psi)\right|_{\Gamma}=0$ we have that $\mathbf{F}(x)=K_{\Gamma}^{*}[\mathbf{f}](x)$ for all $x \in\Omega,$ which follows from the harmonic maximum principle. Lemma \ref{two-sided} now forces $\mathbf{F}$ to be a generalized Laplacian vector field in $\Omega,$ and the proof is complete.
		\end{proof}
	\end{theorem}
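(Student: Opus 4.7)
The plan is to prove the equivalence by handling the two implications separately, using the Cauchy-transform identity $K_\Gamma^{*} = -{}^{\psi^{\theta}}T \circ {}^{\psi^{\theta}}D + \mathrm{id}$ of Definition \ref{dtc1} as the unifying tool.

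For the direction (i) $\Rightarrow$ (ii), harmonicity of $\mathbf{F}$ will follow at once from the factorisation $({}^{\psi^{\theta}}D)^{2} = -\Delta_{\mathbb{R}^{3}}$ combined with ${}^{\psi^{\theta}}D[\mathbf{F}] = 0$. To establish $\mathcal{S}_{\Gamma}^{*}[\mathbf{f}] = \mathbf{f}$, I will use $\mathbf{F}$ itself as a $\text{Lip}_\mu$ extension of $\mathbf{f}$ and substitute it for the Stein--Whitney extension $\mathbf{f}^{w}$ in \eqref{tc2}. The Teodorescu integral over $\Omega_{+}$ then collapses because ${}^{\psi^{\theta}}D[\mathbf{F}] \equiv 0$ in $\Omega$, yielding $K_\Gamma^{*}[\mathbf{f}](x) = \mathbf{F}(x)$ for every $x \in \Omega$. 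Letting $x \to \Gamma$ gives $K_\Gamma^{*}[\mathbf{f}]^{+} = \mathbf{f}$, and therefore $\mathcal{S}_\Gamma^{*}[\mathbf{f}] = 2\mathbf{f} - \mathbf{f} = \mathbf{f}$.

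For the converse (ii) $\Rightarrow$ (i), I set $\Psi := K_\Gamma^{*}[\mathbf{f}]$ in $\Omega$ and $\Psi := \mathbf{f}$ on $\Gamma$. The hypothesis $\mathcal{S}_\Gamma^{*}[\mathbf{f}] = \mathbf{f}$ is precisely $K_\Gamma^{*}[\mathbf{f}]^{+} = \mathbf{f}$, which makes $\Psi$ continuous on $\Omega \cup \Gamma$. The right-inverse property of the Teodorescu operator (\cite{KVS}, p.~73) ensures that $K_\Gamma^{*}[\mathbf{f}]$ is left-$\psi^{\theta}$-hyperholomorphic in $\Omega$ and hence harmonic. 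Consequently $\mathbf{F} - \Psi$ is continuous on $\Omega \cup \Gamma$, harmonic in $\Omega$, and vanishes on $\Gamma$, so the harmonic maximum principle applied componentwise forces $\mathbf{F} \equiv \Psi$ throughout $\Omega$. Since $\mathbf{F}$ is vector-valued and left-$\psi^{\theta}$-hyperholomorphic, Lemma \ref{two-sided} (equivalently, the identification of \eqref{eq2} with \eqref{sedi}) delivers the generalized Laplacian property.

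The delicate step is the first direction: Definition \ref{dtc1} prescribes $K_\Gamma^{*}[\mathbf{f}]$ through the specific Stein--Whitney extension $\mathbf{f}^{w}$, and I must argue that any other $\text{Lip}_\mu$ extension of $\mathbf{f}$---in particular $\mathbf{F}$ itself---produces the same value inside $\Omega$. On a rectifiable boundary this is an immediate Borel--Pompeiu statement, but for a fractal $\Gamma$ the classical Borel--Pompeiu formula is unavailable. Instead I will invoke the Stokes-type limiting argument already developed in the proof of Theorem \ref{t1}: the Whitney mesh $\tilde Q_{k}$, together with the box-dimension bound on $\mathcal{H}^{2}(\Gamma_{k})$ and the $L_{p}$ integrability \eqref{integrability} guaranteed by $\mu > \alpha(\Gamma)/3$, force the boundary contribution along $\Gamma_{k}$ to vanish as $k \to \infty$. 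Once this independence of the extension is in hand, the remaining manipulations in both directions reduce to routine bookkeeping.
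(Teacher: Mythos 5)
Your proposal follows essentially the same route as the paper's own proof: for (i)$\Rightarrow$(ii) you collapse the Teodorescu integral by substituting $\mathbf{F}$ (via its Whitney extension) for $\mathbf{f}^{w}$ in \eqref{tc2}, and for (ii)$\Rightarrow$(i) you use the same auxiliary function $\Psi$, the harmonic maximum principle, and Lemma \ref{two-sided}. The only difference is that you explicitly flag and justify, via the Stokes-type limiting argument of Theorem \ref{t1}, the independence of $K_{\Gamma}^{*}[\mathbf{f}]$ from the choice of $\text{Lip}_{\mu}$ extension --- a point the paper passes over silently --- which is a welcome refinement rather than a departure.
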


\end{document}